\newcommand{\sO}{\mathcal{O}}
\newcommand{\sC}{\mathcal{C}}
\newcommand{\cinf}{\sC^\infty}
\newcommand{\sE}{\mathcal{E}}
\newcommand{\sL}{\mathcal{L}}
\newcommand{\sX}{\mathcal{X}}
\newcommand{\bR}{\mathbf{R}}
\newcommand{\bZ}{\mathbf{Z}}
\newcommand{\ruu}{\bR^{1|1}}
\newcommand{\rou}{\bR^{0|1}}
\newcommand{\tensor}{\otimes}
\newcommand{\comp}{\circ}
\theoremstyle{plain}
\newtheorem{thm}{Theorem}[section]
\newtheorem{prop}[thm]{Proposition}
\newtheorem{cor}[thm]{Corollary}
\newtheorem{lem}[thm]{Lemma}
\newtheorem{rem}[thm]{Remark}
\theoremstyle{definition}
\theoremstyle{remark}
\let\a\alpha
\let\b\beta
\let\g\gamma
\let\th\theta
\let\i\iota
\let\f\varphi
\let\o\omega
\let\G\Gamma
\let\D\Delta
\let\O\Omega
\let\na\nabla
\let\ra\rightarrow
\let\lra\longrightarrow
\let\ti\tilde
\newcommand{\vd}{\partial_\th+\th\partial_t}
\begin{document}
\title{$1|1$ Parallel Transport and Connections}
\author{Florin Dumitrescu}
\date{\today}
\maketitle

\begin{abstract} 

A vector bundle with connection over a supermanifold leads naturally to a notion of parallel transport along superpaths. In this note we show that {\it every} such parallel transport along superpaths comes form a vector bundle with connection, at least when the base supermanifold is a manifold.
\end{abstract}

\tableofcontents


\section{Introduction and Statement of Result}
Let $E$ be a $\bZ/2$-graded vector bundle over a compact manifold $M$. We consider a notion of parallel transport along superpaths in $M$, generalizing the notion of reparametrization-invariant parallel transport along paths in $M$, and show that it characterizes even (grading-preserving) connections over $M$.

Such a problem  is motivated by obtaining a characterization of supersymmetric one-dimensional topological field theories (abbreviated TFTs) over a manifold. This would extend the description of one-dimensional TFTs over a space $M$ as vector bundles with connection over $M$ in \cite{DST}. The equivalence between connections and usual parallel transport seems to be a classical fact, but only recently appears in print (see for example \cite{SW1} or \cite{D4}).

The basic concepts we work with in this paper involve the differential geometry of supermanifolds, and for an introduction to the theory of supermanifolds the reader is referred to Deligne and Morgan \cite{DM}. A quick survey on supermanifolds  can be found in \cite{HKST}. The notion of $1|1$ parallel transport that we use here appears in \cite{D1}.

Recall that a connection (a.k.a. covariant derivative) on a ($\bZ/2$-graded) vector bundle $E$ over $M$ is a first-order differential operator
$ \na: \G(M,E)\ra \G(M, T^*M\tensor E) $
satisfying Leibniz rule, i.e.
\[ \na(fs)= df\tensor s+f\na(s), \ \ f\in\cinf(M), \ \ s\in\G(M,E). \]
The connection is called {\it even} if it respects the $\bZ/2$-grading of the bundle $E$ (along vector fields on $M$).

Let $\partial_{t}$ denote the standard vector field on $\bR$, and  $D=\vd$ the standard odd vector field on $\ruu$.   $1|1$ \emph{parallel transport on $E$ over $M$} is defined by parallel transport along (families of) {\it paths} $\bR\times S\ra M$ (parametrized by supermanifolds $S$), as lifts of $\partial_t$, as well as parallel transport along (families of) {\it superpaths} $\ruu\times S\ra M$, as lifts of  $D$. There is a compatibility relation for parallel transport along paths and superpaths given by diagrams
\[ \xymatrix @C=4pc{ \ruu\times S \ar[rr]^{ \bar{c}} \ar[dr]_{q\times id} & & M, \\
&  \bR\times S \ar[ur]_{c} & } \]
so that a section $s$ along $c$ is $\partial_t$-parallel if and only if $s$ is $D$-parallel along $\bar{c}$ (the map  $q:\ruu\ra\bR$ stands for the obvious projection map). Note that a section $s$ of $\bar{c}^*E$ is of the form $s=s_1+\th s_2$, with $s_i\in \G(c^*E)$, and $s$ is $D$-parallel iff $s=s_1\in \G(c^*E)$, with $s_1$ a $\partial_t$-parallel section. 

The parallel sections along (super)paths should be chosen in such a manner that, when restricted to (super)intervals (see Subsection 3.4 of \cite{D1}), they give rise to linear isomorphisms between the fibers at the endpoints. The $1|1$-parallel transport is compatible under gluing of (super)paths, is the identity on constant (super)paths and is {\it invariant under reparametrization}. The last condition for transport along superpaths means that if $\f:\ruu\times S\ra \ruu\times S$ is a family of diffeomorphisms of $\ruu$ parametrized by $S$ that preserve the distribution determined by the standard vector field $D=\vd$ on $\ruu$, then a section $s\in \G(\ruu\times S; c^*E)$ is parallel along the superpath $c$ if and only if $s\f$ is parallel along the superpath $c\f$ (see Section 3 of \cite{D1} for more details). Similarly, reparametrization-invariance for paths means as usual that the parallel transport is invariant under precomposition by diffeomorphisms of $\bR$.

We should also require that the parallel transport is {\it natural} in the parame-trizing superspace $S$ for families of superpaths. This means that, given a superpath $c:\ruu\times S\ra M$ and a map $\f$ of supermanifolds $S'\ra S$, a section $s'$ is parallel along $c\comp(1\times\f)$ if and only if it is of the form $s'=s\comp(1\times\f)$, for $s$ a parallel section along the superpath $c$.

Note that the definition presented here does not differ from the one given in \cite{D1}, where we performed parallel transport {\it only} using superpaths. Indeed, the parallel transport along paths $c:\bR\times S\ra M$ can be obtained from parallel transport along superpaths $\bar{c}=(q\times 1)\comp c: \ruu\times S\ra M$ if we require that the parallel transport is {\it natural} with respect to the projection map $q:\ruu\ra \bR$; in other words, a section $s$ along $\bar{c}$ is parallel iff it is the pull-back of a section along the map $c$. The reason we prefer to make the parallel transport along paths explicit is that the flow of an even vector field on a supermanifold does not give rise to an interesting family of superpaths (an $\ruu$-action) unless the vector field is the square of an odd vector field on the supermanifold; henceforth we are obliged to consider its $\bR$-action (see Section 2.6 of \cite{D1} for the notion of flows of vector fields on supermanifolds).

One can observe that the invariance under reparametrization of the parallel transport along superpaths implies the invariance under reparametrization of paths. Indeed, any family of diffeomorphisms $\f$ of $\bR$ as below  lifts to a family of diffeomorphisms $\bar{\f}$ of $\ruu$ that preserve the distribution of $D$, and functoriality of parallel transport with respect to the map $q$ and invariance under $\bar{\f}$ imply the invariance of parallel transport under the map $\f$.
\[ \xymatrix@C=4pc{\ruu\times S \ar[rr]^{\bar{\f}} \ar[d]_q \ar@/^1pc/[ddr]^{\bar{c}\bar{\f}} & & \ruu\times S \ar[d]^q \ar@/_1pc/[ddl]_{\bar{c}}\\
\bR\times S \ar@{-->}[rr]_\f \ar[dr]_{c\f} & & \bR\times S. \ar[dl]^{c} \\
& M & } \]
Let us remark that not every family of diffeomorphisms of $\ruu$ descends to a family of diffeomorphisms of $\bR$. For this to happen, the even part of the family $\bar{\f}$ should be independent of the odd variable $\theta$ on $\rou$.

The data given by a $1|1$-parallel transport map can be encoded as a smooth representation of the category $1|1$-tbord($M$) of $1|1$-topological bordisms over the manifold $M$ (whose objects are points in $M$, and morphisms are superpaths in $M$), i.e. a $1|1$-transport map defines a $1|1$-TFT over $M$. Topological field theories were first introduced by Atiyah in \cite{A} as a junction point between topology and quantum field theory. A modern approach to field theories using the language of categories can be found for example in  \cite{ST2}, \cite{HKST} or \cite{HST}.

The above notion of parallel transport can be word-for-word extended to vector bundles over supermanifolds. In \cite{D1} we show that a connection on a vector bundle over a supermanifold gives rise to such a parallel transport. This paper is concerned with showing the equivalence of the two notions when the base space is a manifold. This is enough if we are only interested in describing $1|1$-TFTs over a \emph{manifold}. 
Our main result is the following

\begin{thm}  \label{0} There is a natural 1-1 correspondence

\[  \left\{
\begin{array}{l}
1|1 \text{  parallel transport}\\
\text{ \ \ \ on $E$ over $M$}
\end{array} \right\}
\longleftrightarrow \left\{
\begin{array}{l}
\text{ Even connections  }\\
\text{ \ \  on $E$ over $M$}
\end{array} 
\right\}. \]

\end{thm}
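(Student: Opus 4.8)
The plan is to construct two maps and prove they are mutually inverse. The map from even connections to $1|1$ parallel transport is already supplied in \cite{D1}: an even connection $\na$ induces, along every (super)path, a covariant derivative in the direction of the relevant vector field, and one declares a section parallel precisely when that covariant derivative vanishes. It therefore suffices to produce an inverse—extracting an even connection from an abstract $1|1$ parallel transport—and to check that the two assignments compose to the identity in both orders. The organizing principle throughout is the identity $D^2=\partial_t$, an immediate computation from $D=\vd$, which ties transport along superpaths to transport along paths.

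To extract a connection I would first restrict attention to parallel transport along ordinary paths $\bR\times S\ra M$, the lifts of $\partial_t$. Because $\partial_t$ is even, such transport is grading-preserving and fiberwise linear, and by the classical equivalence between reparametrization-invariant parallel transport and connections (\cite{SW1}, \cite{D4}) it determines a unique connection $\na$ on $E$; grading-preservation makes $\na$ even. In a local frame the parallel condition along a path takes the form $\partial_t s + \o(\partial_t X)\,s=0$ for an $\text{End}(E)$-valued $1$-form $\o$ on $M$, and this $\o$ is the connection $1$-form of $\na$. The content of the compatibility diagram, together with $D^2=\partial_t$, is that superpath transport is governed by the \emph{same} $\o$: the parallel condition along a superpath $c$ should read
\[ Ds + \o(DX)\,s = 0, \]
where $X^i=c^*x^i$ and $DX$ is the odd super-velocity. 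A short probe over $S=\rou$—taking $X^i = x_0^i + v^i\th\eta$, so that $DX^i = v^i\eta$—recovers $\o(v)$ directly from the superpath transport, confirming that path and superpath transport see the same $1$-form.

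It then remains to show the two constructions are inverse. That $\na$ built from the transport induced by a connection returns the original connection is the local computation just indicated, carried out in both the path and superpath formulations. The reverse composite is the crux: starting from an abstract $1|1$ parallel transport, I must show it coincides with the transport induced by the extracted $\na$. Since a $\na$-parallel section along a (super)path is the unique solution of the first-order equation above with prescribed value at a point—uniqueness being ODE uniqueness for the flow of $\partial_t$ and for the odd flow of $D$—it is enough to prove that the given transport is itself governed by a first-order local equation with this same, $S$-independent coefficient pulled back from $M$. Here naturality in the parametrizing superspace $S$, invariance under reparametrizations of $\ruu$ preserving the distribution of $D$, and compatibility under gluing must be combined: naturality in $S$ forces the infinitesimal generator to be pulled back from a tensor on $M$; reparametrization invariance forces it to depend on the superpath only through the first-order super-velocity $DX$, and linearly; and gluing upgrades this to the displayed differential equation.

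The main obstacle I anticipate is precisely this last step: proving that an arbitrary $1|1$ parallel transport, a priori only a functorial assignment of linear isomorphisms, is necessarily \emph{differential}, i.e. generated by a first-order operator of the form $D+\o(DX)$ with $\o$ a globally defined $\text{End}(E)$-valued $1$-form on $M$. Establishing the existence of such an infinitesimal generator and pinning down its shape from the axioms is the technical heart of the argument; the relation $D^2=\partial_t$ is what guarantees that the odd generator extracted from superpaths squares to the even generator extracted from paths, so the two pieces of data assemble into a single connection rather than independent structures. It is also this interplay—genuine odd information of superpaths over a manifold being forced, through $D$ and the family structure, by even path data—that explains why the theorem is stated for a base \emph{manifold} $M$.
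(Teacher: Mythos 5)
Your overall architecture --- reuse the map of \cite{D1} for one direction, extract an even connection $\na$ from the path part of the transport via the classical correspondence, and then show the two composites are the identity --- is sound as far as it goes, and the first composite (connection $\to$ transport $\to$ connection) is indeed unproblematic. But the proposal has a genuine gap at exactly the point you yourself flag as ``the technical heart'': you never prove that the superpath component of an abstract $1|1$ transport is generated by a first-order operator $D+\o(DX)$, i.e.\ that it coincides with the transport induced by the extracted $\na$. Saying that naturality in $S$ ``forces the generator to be pulled back from a tensor on $M$,'' that reparametrization invariance ``forces linear dependence on $DX$,'' and that gluing ``upgrades this to the displayed differential equation'' is a list of desiderata, not an argument; in particular the very existence of an infinitesimal generator (that the transport is differential at all in the odd direction) is assumed rather than established. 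Your probe over $S=\rou$ with $X^i=x_0^i+v^i\th\eta$ recovers $\o(v)$ only \emph{if} the transport is already known to satisfy $Ds+\o(DX)s=0$, so it cannot be used to derive that equation; as a test it is circular.

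This missing step is precisely where the paper does its real work, and the paper's solution explains why your direct-on-$M$ strategy is hard to complete: on a manifold $M$ there are no odd vector fields, hence no flows along which one can differentiate the transport in odd directions to manufacture a generator. The paper therefore routes the proof through the odd tangent bundle $\Pi TM$ (Propositions \ref{1}, \ref{2} and \ref{3}): there the vector fields $\sL_X$ and $\iota_X$, for $X$ a vector field on $M$, do have flows; parallel transport along these flows is differentiated to produce candidate derivations $\na_{\sL_X}$ and $\na_{\iota_X}$; the Leibniz-type identities needed to assemble them into a connection are verified using Trotter-type formulas for flows of sums and explicit formulas for flows of function multiples $fX$ (Section 4.1, including the delicate cases of an odd function times a vector field); and finally an arbitrary family of superpaths $c$ is handled by factoring it as $c=\a\comp(1\times c_0\times 1)$ through a family of flows and invoking naturality and smoothness. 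To salvage your approach you would need a substitute for this entire mechanism --- a proof that the transport depends on a superpath only through its first-order data, and linearly --- and the axioms do not yield that without an argument of comparable substance.
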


In other words, since  $1|1$-TFTs over $M$ are represented by  $1|1$-parallel transport maps over $M$, we can reformulate the theorem as
\[  
1|1\text{-TFT}(M)
\cong \left\{
\begin{array}{l}
\bZ/2\text{-bundles with even connections  }\\
\text{\ \ \ \ \ \ \ \ \ \ \ \ \ \ \ \  over $M$}
\end{array} 
\right\}, \]
where the left-hand side denotes the space of all $1|1$-TFTs over the space $M$ (to avoid set-theoretic issues, we require that the field theories over points are vector spaces in a \emph{fixed} infinite dimensional  
vector space). 

In \cite{D1} Section 3 we constructed a map
\[  \left\{
\begin{array}{l}
1|1 \text{  parallel transport}\\
\text{ \ \ \ \ on $E$ over $M$}
\end{array} \right\}
\longleftarrow \left\{
\begin{array}{l}
\text{ Even connections  }\\
\text{ \ \ \ on $E$ over $M$}
\end{array} 
\right\}, \]

\noindent when $M$ is a supermanifold. The parallel transport along superpaths was defined by considering sections that are constant along superpaths in the direction of the vector field $D$ with respect to the pull-back connection. Moreover, the parallel transport {\it recovers} the connection, making the above map injective. The novelty of the Theorem \ref{0} consists in producing a connection out of a $1|1$-parallel transport map and show that it provides an inverse to the natural map ``$\longleftarrow$". The proof will be the result of the equivalences expressed in the diagram below.
\[ \xymatrix @  C=2cm { \{1|1\text{ transport on $E$ } \} \ar@{<->}[d]_{\text{Prop } \ref{2}}  \ar@{<-->}[r]^-{\text{Th } \ref{0}} & \{\text{even connections on $E$  }  \} \\
\{  1|1 \text{ o.t. transport on $\pi^*E$ }  
\} \ar@{<->}[r]_{\text{Prop } \ref{3}} & \{ \text{o.t. connections on $\pi^*E$  }  \}. \ar@{<->}[u]_{\text{Prop } \ref{1}} }\]
The bundle $\pi^*E$ is the pull-back bundle of the bundle $E$ via the map $\pi:\Pi TM\ra M$ from the ``odd tangent bundle'' of $M$ to $M$, which on functions is the inclusion of functions on $M$, as 0-forms, into the space of differential forms on $M$. The abbreviation o.t. stands for ``odd-trivial" (see below). 

The reason we run our proof through the intermediate supermanifold $\Pi TM$ instead of directly working on the manifold $M$ is that on $\Pi TM$ we can find  interesting {\it families} of superpaths to capture the geometry of $1|1$-parallel transport, namely those parametrized by $\Pi TM$ itself, expressing the flows of {\it odd} vector fields on $\Pi TM$. Alas, on $M$ there are no odd vector fields.


\section{Odd-trivial connections}

\begin{prop} \label{1} Let $E$ be a $\bZ/2$-graded vector bundle over $M$. There is a 1-1 correspondence

\[  \left\{
\begin{array}{l}
\text{ Grading-preserving connections  }\\
\text{\ \ \ \ \ \ \ \ \ \ \ on $E$ over $M$}
\end{array} \right\}
\longleftrightarrow \left\{
\begin{array}{l}
\text{Odd-trivial connections}\\
\text{ \ \ on $\pi^*E$ over $\Pi TM$}
\end{array} 
\right\} \]

\end{prop}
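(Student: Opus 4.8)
The plan is to set up an explicit correspondence between even connections on $E$ over $M$ and odd-trivial connections on $\pi^*E$ over $\Pi TM$, where the latter notion must first be made precise. The key structural fact is that functions on $\Pi TM$ are precisely differential forms on $M$, i.e. $\cinf(\Pi TM)\iso \Omega^*(M)$, and the map $\pi:\Pi TM\ra M$ corresponds on functions to the inclusion $\cinf(M)=\Omega^0(M)\hookrightarrow\Omega^*(M)$. Under this identification, sections of $\pi^*E$ are elements of $\Omega^*(M)\tensor_{\cinf(M)}\G(M,E)=\G(M,\Lambda^*T^*M\tensor E)$, the $E$-valued differential forms. I expect ``odd-trivial" to name the condition that the connection, in the form-degree direction, agrees with the canonical (de Rham-type) operator; concretely an odd-trivial connection $\nab$ on $\pi^*E$ should satisfy $\nab_X(\o\tensor s)=d_{\text{odd}}\o\tensor s + (-1)^{|\o|}\o\tensor\na_X s$ along the odd directions coming from the fiber of $\Pi TM$, so that the only free data is the even connection $\na$ on $E$ itself.

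First I would fix coordinates: choosing local coordinates $x^i$ on $M$ produces odd fiber coordinates $\psi^i=dx^i$ on $\Pi TM$, and the even vector fields $\partial_{x^i}$ on $M$ together with the odd vector fields $\partial_{\psi^i}$ span the tangent directions of $\Pi TM$. An even connection $\na$ on $E$ is specified locally by its connection one-form, i.e. by the operators $\na_{\partial_{x^i}}$, which act on $\G(M,E)$. Given such a $\na$, I would define a connection $\nab$ on $\pi^*E$ by pulling back $\na$ along $\pi$ in the even directions $\partial_{x^i}$ and by declaring the covariant derivative in the odd fiber directions $\partial_{\psi^i}$ to be the tautological (trivial) derivative dictated by the de Rham differential; this is what ``odd-trivial" should encode. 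One then checks that $\nab$ so defined is a genuine connection on $\pi^*E$ (Leibniz rule over $\cinf(\Pi TM)=\Omega^*(M)$) and that it is odd-trivial.

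For the reverse direction, I would start from an arbitrary odd-trivial connection $\nab$ on $\pi^*E$ and restrict it along the zero section $M\hookrightarrow\Pi TM$, or equivalently extract the coefficient of form-degree zero, to recover an operator on $\G(M,E)$; the odd-triviality condition guarantees the derivatives in the $\psi^i$ directions carry no new information, so the even-direction data is exactly a connection $\na$ on $E$. The two constructions are visibly inverse to one another once one verifies that grading-preservation of $\na$ corresponds to $\nab$ respecting the total $\bZ/2$-grading on $\pi^*E$ (which combines the grading of $E$ with the parity of form-degree).

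The main obstacle I anticipate is bookkeeping with signs and gradings rather than any deep difficulty: one must be careful that the $\bZ/2$-grading on $\pi^*E$ is the total parity (internal grading of $E$ plus form-parity on $\Pi TM$), and that ``even connection" on $E$ translates correctly into a grading-preserving condition for $\nab$ under this total grading. I would also need to argue that the local constructions glue to a global correspondence independent of the choice of coordinates $x^i$; this follows because both the de Rham differential and the pullback of $\na$ are coordinate-independent, but making the odd-trivial condition manifestly coordinate-free — phrasing it intrinsically in terms of the canonical odd vector field on $\Pi TM$ and the pullback structure of $\pi^*E$ — is the step requiring the most care.
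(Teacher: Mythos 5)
Your overall route coincides with the paper's: the correspondence is $\na\mapsto\pi^*\na$ in one direction, and restriction along the zero section $i:M\hookrightarrow\Pi TM$ (equivalently, extraction of the form-degree-zero part) in the other. The genuine gap is in your working definition of ``odd-trivial''. In the paper (Lemma \ref{ot}), an odd-trivial connection $\ti{\na}$ on $\pi^*E$ must satisfy \emph{two} conditions on pulled-back sections: $\langle\ti{\na}(\pi^*s),\iota_X\rangle=0$ \emph{and} $\langle\ti{\na}(\pi^*s),\sL_X\rangle\in\pi^*\G(M;E)$, for every vector field $X$ on $M$. You impose only the first (triviality in the odd fiber directions $\partial_{\psi^i}$) and then assert that ``the even-direction data is exactly a connection $\na$ on $E$,'' so that pullback and restriction are ``visibly inverse.'' That inference fails: a connection satisfying only the odd-direction condition may have even-direction covariant derivatives whose connection coefficients are differential forms of positive degree, and all of that data is killed by restriction to the zero section, so your reverse map is not injective on your class of connections and the claimed bijection breaks.

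Concretely, fix an even connection $\na$ on $E$, a $1$-form $\eta$ and a $2$-form $\o$ on $M$, and a nonzero even endomorphism $T$ of $E$. Define an endomorphism-valued $1$-form $A$ on $\Pi TM$ on the generating vector fields by $\langle A,\sL_X\rangle:=$ multiplication by the form $\eta(X)\,\o$ composed with $T$, and $\langle A,\iota_X\rangle:=0$. This is consistent with the module relations $\iota_{fX}=f\iota_X$ and $\sL_{fX}=f\sL_X+df\cdot\iota_X$ (both pairings with $\iota_X$ vanish), so $A$ is well defined, and it is parity-preserving because $\o$ and $T$ are even. Then $\ti{\na}:=\pi^*\na+A$ is a grading-preserving connection on $\pi^*E$ that is odd-trivial in your sense and restricts to $\na$ along the zero section, yet $\langle\ti{\na}(\pi^*s),\sL_X\rangle=\pi^*(\na_X s)+\eta(X)\,\o\wedge T(s)$ has a form-degree-two component, so $\ti{\na}\neq\pi^*\na$. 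Note also that your proposed fallback---that requiring $\nab$ to respect the total $\bZ/2$-grading on $\pi^*E$ would exclude such examples---does not help, precisely because $\o$ can be chosen of even degree. The repair is simply to build the second condition of Lemma \ref{ot} into the definition of odd-triviality; with that, your pullback/restriction argument goes through and becomes essentially the paper's proof, in which the proposition is immediate from that lemma.
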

To motivate the definition of odd-trivial connections let us begin by stating the following lemma whose proof is clear.
\begin{lem} \label{ot}
Let $\ti{\na}$ denote a connection on the pullback bundle $\pi^*E$ over $\Pi TM$ via the projection map $\pi: \Pi TM \ra M$. Then 
\[ \ti{\na}= \pi^*\na, \]
for some connection $\na$ on the bundle $E$ over $M$ if and only if 
\[ \langle\ti{\na}(\pi^*s), \iota_X\rangle=0\ \ \text{ and }\ \  \langle\ti{\na}(\pi^*s), \sL_X\rangle\in\pi^*\G(M;E), \]
for any  $s\in\G(M;E)$ a section of $E$ and any  $X$ a vector field on $M$.
\end{lem}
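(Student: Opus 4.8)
The plan is to exploit the fact that $\Pi TM$ has function algebra $\cinf(\Pi TM)=\Omega^*(M)$, with $\pi^*$ the inclusion of $0$-forms, so that every section of $\pi^*E$ is an $\Omega^*(M)$-linear combination of pullback sections $\pi^*s$, $s\in\G(M;E)$. Consequently a connection $\ti\na$ on $\pi^*E$ is completely determined by the $\pi^*E$-valued one-forms $\ti\na(\pi^*s)$, and to read these off I would pair against a frame of $T(\Pi TM)$. In local coordinates $x^i$ on $M$, with induced even/odd coordinates $x^i,\psi^i$ on $\Pi TM$, one checks that $\iota_{\partial_{x^i}}=\partial_{\psi^i}$ is odd and spans the vertical distribution $\ker d\pi$, while $\sL_{\partial_{x^i}}=\partial_{x^i}$ is even and satisfies $d\pi(\sL_{\partial_{x^i}})=\partial_{x^i}$. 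Thus, as $X$ ranges over a local frame of $TM$, the fields $\{\iota_X,\sL_X\}$ furnish a frame of $T(\Pi TM)$, and an element of $T^*(\Pi TM)\tensor\pi^*E$ is pinned down by its pairings with the $\iota_X$ and the $\sL_X$. This structural observation is what makes both implications essentially formal.

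For the forward implication I would compute the pullback connection directly. The codifferential $\pi^*\colon\pi^*T^*M\to T^*(\Pi TM)$ has image annihilating the vertical distribution $\ker d\pi$, which is spanned by the $\iota_X$. Hence $\pi^*\na(\pi^*s)=\pi^*(\na s)$ has no $\iota$-component, giving $\langle\pi^*\na(\pi^*s),\iota_X\rangle=0$, the first condition; and since $d\pi(\sL_X)=X$, pairing with $\sL_X$ yields $\langle\pi^*\na(\pi^*s),\sL_X\rangle=\pi^*(\na_Xs)$, a pullback section, which is the second condition.

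For the reverse implication, assume $\ti\na$ satisfies both conditions and define a candidate connection on $E$ by declaring $\pi^*(\na_Xs):=\langle\ti\na(\pi^*s),\sL_X\rangle$; this is legitimate because the right-hand side lies in $\pi^*\G(M;E)$ by the second condition and $\pi^*$ is injective on sections. The Leibniz rule in $s$ follows from the Leibniz rule for $\ti\na$ together with $\sL_X(\pi^*f)=\pi^*(Xf)$ on $0$-forms. To finish I would verify $\ti\na=\pi^*\na$ by comparing the two $\pi^*E$-valued one-forms on the frame $\{\iota_X,\sL_X\}$: they agree against $\sL_X$ by construction, and against $\iota_X$ they both vanish (by the first condition, respectively by the forward computation).

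The one point that is not purely formal, and which I expect to be the crux, is the $\cinf(M)$-linearity of $\na$ in the vector field $X$. Naively $\na_{fX}s\neq f\na_Xs$, because $\sL$ is not tensorial: as derivations of $\Omega^*(M)$ one has $\sL_{fX}=f\sL_X+df\wedge\iota_X$. However, pairing $\ti\na(\pi^*s)$ against this identity and invoking the first condition $\langle\ti\na(\pi^*s),\iota_X\rangle=0$ annihilates exactly the correction term $df\wedge\iota_X$, restoring $\na_{fX}s=f\na_Xs$. Thus the odd-vanishing condition is precisely what is needed to make the even-direction data descend to a genuine connection on $M$, and with tensoriality in hand the two implications above combine to give the stated equivalence.
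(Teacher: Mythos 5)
Your proof is correct; the paper itself gives no argument for this lemma, simply declaring its proof ``clear,'' and your verification — checking both conditions on the frame $\{\iota_X,\sL_X\}$ of $T(\Pi TM)$ and reconstructing $\na$ from the $\sL_X$-pairings — is evidently the intended one. In particular, your identification of the crux, namely that $\sL_{fX}=f\sL_X+df\wedge\iota_X$ so that the vanishing condition $\langle\ti{\na}(\pi^*s),\iota_X\rangle=0$ is exactly what kills the correction term and restores $\cinf(M)$-linearity of $X\mapsto\na_X s$, is the one genuinely non-formal point the paper leaves unstated.
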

\noindent Here $\iota_X$ is the contraction by the vector field $X$ acting on $\O^*(M)=\cinf(\Pi TM)$, interpreted as an odd derivation (i.e. vector field) on $\Pi TM$. Similarly, $\sL_X$ acts as a derivation in the direction of $X$ on differential forms, and is interpreted as an even vector field on $\Pi TM$. The sharp bracket stands for the pairing between 1-forms and vector fields on $\Pi TM$. \\

\noindent {\bf Remark:} The zero-equality above is not true for all odd vector fields on $\Pi TM$, for example we have
\[ \langle\ti{\na}(\pi^*s), d\rangle= \na s, \]
where $d$ is the standard odd vector field on $\Pi TM$, inducing the exterior derivative $d$ on differential forms. Still it stays true for vector fields pointing in the ``odd'' directions. (Note that $d$ can be written locally as $d=\sum dx^{i}\frac{\partial}{\partial x^{i}}$ so it points ``even''.)

Let us call such connections on pullback bundles $\pi^*E\ra \Pi TM$ as in Lemma \ref{ot} {\it odd-trivial connections}.

The proof of Proposition \ref{1} is now clear since the lemma is a mere reformulation of the statement. From Lemma \ref{ot} immediately follows 

\begin{lem} If $\ti{\na}$ is an odd-trivial connection, then $\ti{\na}$ is flat in the odd directions, i.e.
\[ [\ti{\na}_X, \ti{\na}_Y]= \ti{\na}_{[X,Y]}, \]
for $X, Y$ odd derivations on $\Pi TM$. 
\end{lem}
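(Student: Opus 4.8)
The plan is to reduce the flatness identity to two elementary facts: that contraction operators on differential forms anticommute, and that an odd-trivial connection differentiates pull-back sections trivially in the contraction directions. First I would pin down what ``odd directions'' means here. As the Remark makes explicit, the exterior derivative $d$ is an odd derivation that nonetheless ``points even,'' and for it the vanishing $\langle\ti{\na}(\pi^*s),d\rangle=0$ fails; so the odd directions in the statement must be the genuinely vertical ones, spanned over $\cinf(\Pi TM)=\Omega^*(M)$ by the contractions $\iota_X$ with $X$ a vector field on $M$. It therefore suffices to prove the identity for $X,Y$ of the form $\iota_{X_0},\iota_{Y_0}$.

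Next I would invoke tensoriality of the curvature operator $\ti{R}(X,Y)=[\ti{\na}_X,\ti{\na}_Y]-\ti{\na}_{[X,Y]}$. In the graded setting $[\ti{\na}_X,\ti{\na}_Y]$ denotes the graded commutator and $[X,Y]$ the super Lie bracket; with these conventions $\ti{R}$ is graded-antisymmetric and $\cinf(\Pi TM)$-linear in each of its three slots (the two vector fields and the section). Hence it is enough to evaluate $\ti{R}(\iota_{X_0},\iota_{Y_0})$ on a local frame of $\pi^*E$, and the natural choice is the pull-back frame $\{\pi^*e_a\}$ of a local frame $\{e_a\}$ of $E$.

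The computation then collapses. By Lemma \ref{ot}, odd-triviality gives $\ti{\na}_{\iota_{X_0}}(\pi^*e_a)=\langle\ti{\na}(\pi^*e_a),\iota_{X_0}\rangle=0$, and likewise for $Y_0$, so both terms of the graded commutator kill $\pi^*e_a$. For the bracket term, the key classical fact is that contractions anticommute, $\iota_{X_0}\iota_{Y_0}+\iota_{Y_0}\iota_{X_0}=0$, so the super bracket $[\iota_{X_0},\iota_{Y_0}]$ vanishes as a derivation and $\ti{\na}_{[\iota_{X_0},\iota_{Y_0}]}=0$. Thus $\ti{R}(\iota_{X_0},\iota_{Y_0})(\pi^*e_a)=0$, and tensoriality upgrades this to the vanishing of $\ti{R}$ on all vertical odd pairs, which is the asserted flatness. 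An equivalent coordinate-free way to see the same thing is to note that odd-triviality means $\ti{\na}=\pi^*\na$ (Lemma \ref{ot}), so the curvature is the basic form $\ti{\Omega}=\pi^*\Omega_\na$, and a pull-back form has vanishing contraction against vertical vector fields.

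The only genuine subtlety, and the step I would be most careful about, is the parity bookkeeping: one must verify that the graded commutator and super bracket are the correctly signed objects so that $\ti{R}$ is honestly tensorial in the super sense, and that $\iota_{X_0}\iota_{Y_0}+\iota_{Y_0}\iota_{X_0}=0$ is the right graded identity for two odd operators. Once the signs are fixed the argument is, as the text claims, immediate; the conceptual content lies entirely in correctly identifying the ``odd directions'' as the vertical contraction directions and in the anticommutativity of contractions.
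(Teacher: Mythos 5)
Your proof is correct and takes essentially the same route as the paper: the paper's one-sentence proof is exactly your reduction step (it suffices to check the identity on derivations of the form $\iota_X$, since these generate the odd directions as an $\Omega^*(M)^{ev}$-module), with the final computation --- $\ti{\na}_{\iota_X}\pi^*s=0$ by odd-triviality and $[\iota_X,\iota_Y]=0$ by anticommutativity of contractions --- left implicit. Your explicit verification on a pull-back frame, and your careful identification of the ``odd directions'' as the vertical contraction directions (excluding, e.g., $d$), simply spell out what the paper takes as read.
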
 
\begin{proof} It is enough to check the relation for odd derivations of the type $\iota_X$, where $X$ is a vector field on $M$, since arbitrary odd derivations on $\Pi TM$ can be written as $\Omega^*(M)^{ev}$-combination of these. 

\end{proof}


\section{Odd-trivial $1|1$-parallel transport}

We say that the $1|1$-parallel transport on a bundle $\pi^*E$ over $\Pi TM$ is {\it odd-trivial} if the parallel transport along maps
\[ \bar{\a}_X:\ruu\times \Pi TM \ra \Pi TM, \]
given by the flow of vector fields (see Section 2.6 of \cite{D1}) of the form $\iota_X$ on $\Pi TM$, where $X$ is a vector field on $M$, is the identity on sections with initial condition of the form $\pi^*s\in\G(\pi^*E)$, for $s\in\G(E)$. Recall (see \cite{DM}) that for such odd vector fields $\iota_X$ on $\Pi TM$ that square to zero, the flow is actually determined by an $\rou$-action on $\Pi TM$, $\a_{\iota_X}:\rou\times \Pi TM \ra \Pi TM$ and the map $\bar{\a}_X$ factors as below
\[ \xymatrix @ C=2cm{ \ruu\times \Pi TM \ar[rr]^{ \bar{\a}_X} \ar[dr]_{p\times id} & & \Pi TM, \\
&  \rou\times \Pi TM \ar[ur]_{\a_{\iota_X}} & } \]
where $p:\ruu\ra\rou$ is the obvious projection map. The identity requirement above makes sense since the pullback of the bundle $\pi^*E$ via the map $\bar{\a}_X$ is the bundle $\ruu\times \pi^*E$ over $\ruu\times \Pi TM$, as the bundle is the pullback bundle of the bundle $\a_{\iota_X}^*\pi^*E$ via the map $p\times 1$, and the bundle $\a_{\iota_X}^*\pi^*E$ is the pullback bundle of $E$ via the map $p_0\times \pi:\rou\times \Pi TM \ra M$, i.e. it is the bundle $\rou\times \pi^*E$.  

We should also require that for  parallel transport along paths given by the flows $\a_X:\bR\times \Pi TM\ra \Pi TM$ of even vector fields $\sL_X$ on $\Pi TM$ coming from vector fields $X$ on $M$, we have that 
\[ p^{\Pi TM}(\a_X;\pi^*s\in \G(\pi^*E))\in (1\times\pi)^*\G(\underline{\a}_X^*E), \]
where the map $\underline{\a}_X:\bR\times M\ra M$ is the flow of the vector field $X$ on $M$. (We use the notation $p^N(c;s_0)$ for parallel sections in the space $N$ along the (super)path $c$, determined by the initial condition $s_0$.) Note that there is a compatibility of the flows with the projection map $\pi$, as illustrated by the diagram
\[ \xymatrix @ C=3cm{ \bR\times \Pi TM \ar[d]_{1\times \pi} \ar[r]^{\a_X} &  \Pi TM \ar[d]^\pi \\
\bR \times M \ar[r]_{\underline{\a}_X} & M. } \]

\begin{prop}  \label{2} There is a 1-1 correspondence

\[  \left\{
\begin{array}{l}
1|1 \text{  parallel transport}\\
\text{ \ \ \ on $E$ over $M$}
\end{array} \right\}
\longleftrightarrow \left\{
\begin{array}{l}
1|1 \text{ odd-trivial parallel transport}\\
\text{ \ \ \ \ \ \ on $\pi^*E$ over $\Pi TM$}
\end{array} 
\right\} \]

\end{prop}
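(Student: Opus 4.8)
The plan is to realize the correspondence through the projection $\pi\colon\Pi TM\to M$ and its zero section $\zeta\colon M\to\Pi TM$, which on functions is the degree-zero evaluation $\Om^*(M)\to\cinf(M)$ and satisfies $\pi\comp\zeta=\mathrm{id}_M$. For any superpath $c\colon\ruu\times S\to\Pi TM$ one has $c^*\pi^*E=(\pi c)^*E$, so a section of $\pi^*E$ along $c$ is literally the same datum as a section of $E$ along $\pi c$; dually, a section of $E$ along a superpath $\g\colon\ruu\times S\to M$ is the same as a section of $\pi^*E$ along $\zeta\g$, since $\pi\zeta=\mathrm{id}$. This lets me move the predicate ``parallel'' back and forth. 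Define the forward map $\Phi$ by declaring a section along a (super)path $c$ into $\Pi TM$ to be $\Phi(P)$-parallel iff it is $P$-parallel along $\pi c$, and the backward map $\Psi$ by declaring a section along a (super)path $\g$ in $M$ to be $\Psi(\tilde P)$-parallel iff it is $\tilde P$-parallel along $\zeta\g$. Since $\pi$ and $\zeta$ are fixed maps, every diagram in the definition of $1|1$-transport (gluing, triviality on constant paths, reparametrization invariance, naturality in $S$, and the path/superpath compatibility) is obtained by post-composition and transfers verbatim, so both $\Phi$ and $\Psi$ land in genuine $1|1$-transports.

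Next I would check that $\Phi(P)$ is odd-trivial. For the odd condition, $\iota_X$ annihilates $\Om^0(M)=\cinf(M)$, hence is $\pi$-related to the zero field on $M$; therefore $\pi\comp\bar\a_X$ is constant in the $\ruu$-direction, and since transport along a constant superpath is the identity, $\Phi(P)$ is the identity on sections $\pi^*s$ along $\bar\a_X$. For the even condition, $\sL_X$ restricts to $X$ on $\cinf(M)$, so $\pi$ is $(\sL_X,X)$-related and $\pi\comp\a_X=\underline{\a}_X\comp(1\times\pi)$ as in the displayed square; by definition the $\Phi(P)$-parallel section along $\a_X$ with initial value $\pi^*s$ is the $P$-parallel section along $\underline{\a}_X\comp(1\times\pi)$, and naturality of $P$ in the parametrizing space exhibits this as the $(1\times\pi)$-pullback of the parallel section along $\underline{\a}_X$, i.e. an element of $(1\times\pi)^*\G(\underline{\a}_X^*E)$.

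One composite is immediate: a section is $\Psi\Phi(P)$-parallel along $\g$ iff it is $P$-parallel along $\pi\zeta\g=\g$, so $\Psi\comp\Phi=\mathrm{id}$. The content of the proposition is the other composite $\Phi\comp\Psi=\mathrm{id}$, namely that an odd-trivial $\tilde P$ is recovered from its restriction to the zero section. Unwinding the definitions, this reduces to a single claim: for every superpath $c\colon\ruu\times S\to\Pi TM$, the $\tilde P$-parallel sections along $c$ agree with those along $\zeta\pi c$ (both are sections of $(\pi c)^*E$, so the comparison is meaningful). The two superpaths have the same $\pi$-image and differ only in the odd fibre coordinates $\psi^i=dx^i$ of $\Pi TM$; odd-triviality is exactly the statement that displacement in these fibre directions is invisible to $\tilde P$. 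Concretely, the fibre directions are spanned by the odd fields $\iota_X$, whose flows $\bar\a_X$ carry trivial transport by hypothesis, so I would connect $c$ to $\zeta\pi c$ by a fibrewise sliding built from the $\rou$-actions $\a_{\iota_X}$ and conclude that parallelness is unchanged.

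I expect this last step to be the main obstacle. The difficulty is that the $\psi$-part of a general superpath is an arbitrary odd function of $(t,\th,s)$, whereas one flow $\bar\a_X$ slides the fibre only by a fixed odd time; realizing the full fibre displacement therefore requires encoding it as a pull-back of the universal flow along a map of parametrizing spaces and invoking naturality in $S$, and one must verify that this interpolation is compatible with the direction $D=\vd$ along which transport is taken, so that reparametrization invariance and gluing may be applied. Making this ``sliding to the zero section'' precise for families, with the odd-variable dependence handled correctly, is the technical core; the remaining work is the routine transfer of the transport axioms across $\pi$ and $\zeta$ carried out above.
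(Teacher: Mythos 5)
Your setup coincides with the paper's: your $\Phi$ and $\Psi$ are exactly the paper's two constructions (post-composition with the projection $\pi$, respectively with the inclusion $i=\zeta$ of $M$ into $\Pi TM$), and your verification of the transport axioms, of the odd-triviality of $\Phi(P)$, and of the easy composite $\Psi\circ\Phi=\mathrm{id}$ all match the paper's. The problem is the step you yourself flag as the ``technical core'': you never prove $\Phi\circ\Psi=\mathrm{id}$, i.e.\ that for every superpath $c$ in $\Pi TM$ the $\tilde{P}$-parallel sections along $c$ agree with those along $\zeta\pi c$. Your proposed ``fibrewise sliding'' does not obviously work: odd-triviality is a hypothesis only about the very special families $\bar{\alpha}_X$ (flows of $\iota_X$ for $X$ a vector field on $M$), and only for initial conditions of the form $\pi^*s$; a general superpath has fibre components that are arbitrary odd functions of $(t,\theta)$ and of the parametrizing variables, and no finite composition of the flows $\alpha_{\iota_X}$, nor pullbacks of them along maps of parametrizing spaces, visibly produces the needed interpolation between $c$ and $\zeta\pi c$ while remaining compatible with the distribution of $D$. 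Bridging this gap is not a routine completion --- it is essentially the entire analytic content of the paper: the Trotter-type formula, the flows of $fX$ for $f$ odd, the even-odd rules, and the reduction of arbitrary families of superpaths to flows occupy all of Section 4.

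The paper sidesteps the direct verification entirely, and this is the idea your proposal is missing: instead of proving the hard composite by hand, it proves that $\Psi$ (the map ``$\longleftarrow$'') is \emph{injective}, which together with $\Psi\circ\Phi=\mathrm{id}$ formally forces $\Phi\circ\Psi=\mathrm{id}$. Injectivity is obtained by a detour through the rest of the paper: by Proposition \ref{3} (proved later, so the proof of Proposition \ref{2} is not logically self-contained) odd-trivial transports on $\pi^*E$ correspond to odd-trivial connections, hence by Proposition \ref{1} to even connections on $E$ over $M$, and even connections inject into ordinary $1$-transports on $M$ because a connection is recovered from its usual parallel transport; commutativity of the relevant diagrams then transfers this injectivity to $\Psi$. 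If you want to keep a direct approach, you would in effect have to reconstruct a connection from the odd-trivial transport --- which is precisely what Proposition \ref{3} does --- so there is no shortcut through naive sliding to the zero section.
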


\begin{proof}
``$\longleftarrow$" Denote by $j:\bR^0\ra \ruu$ the standard inclusion of a point in $\ruu$, namely mapping to $(0,0)\in \ruu$. Consider an arbitrary superpath $c$ in $M$ as below
\[ \xymatrix @C=4pc { c^*E \ar[r] \ar[d] & E \ar[r] \ar[d] & \pi^*E \ar[d] \\
\ruu\times S \ar[r]_c & M \ar[r]_i & \Pi TM.} \]
To define a $1|1$ parallel transport in $M$, we need to specify for each such superpath $c$ in $M$ a parallel section $p^M(c; h\tensor s)$  along $c$, for each initial condition 
$$h\tensor s\in \G(S, c_0^*E)\cong\cinf(S)\tensor \G(M, E),$$
where $h\in\cinf(S)$ and $s\in\G(M, E)$, and $c_0=c\comp j$. Define
\[ p^M(c; h\tensor s):= p^{\Pi TM}(ic; h\tensor s), \]
where $i:M\ra \Pi TM$ denotes the standard inclusion. Note that 
\[ c_0^*E\cong c_0^*i^*\pi^*E,\]
since $\pi i=id$. Let now $\f:\ruu\times S\ra \ruu\times S$ denote a family of diffeomorphisms of $\ruu$ preserving the conformal structure (the distribution determined by the vector field $D=\vd$ defining the standard metric structure on $\ruu$) and the point $(0,0)$. Then
\begin{eqnarray*}
p^M(c\f; h\tensor s) &=&  p^{\Pi TM}(ic\f; h\tensor s) \\
&= & p^{\Pi TM}(ic; h\tensor s)\comp\f \\
&= & p^M(c; h\tensor s)\comp\f.
\end{eqnarray*}
The second equality holds since the $1|1$-parallel transport on $\Pi TM$ is invariant under reparametrization. This means that the $1|1$-parallel transport on $M$ we constructed is invariant under reparametrization. Compatibility under glueing of superpaths and the identity on constant superpaths are obvious properties of the constructed parallel transport. 

Similarly, for a (family of) path(s) $c$ in $M$ as below
\[ \xymatrix @C=4pc { c^*E \ar[r] \ar[d] & E \ar[r] \ar[d] & \pi^*E \ar[d] \\
\bR\times S \ar[r]_c & M \ar[r]_i & \Pi TM,} \]
we define  $p^M(c; h\tensor s):= p^{\Pi TM}(ic; h\tensor s)$, for $h\tensor s\in \G(S, c_0^*E)$ a section along $c_0:S\ra M$. It is clear that the parallel transport along paths is invariant under reparametrization and compatible under glueing of paths. \\

``$\lra$" Given a superpath $c$ in $\Pi TM$ as below
\[ \xymatrix @C=4pc { c^*\pi^*E \ar[r] \ar[d] & \pi^*E \ar[r] \ar[d] & E \ar[d] \\
\ruu\times S \ar[r]_c & \Pi TM \ar[r]_\pi & M} \]
we need to specify a parallel section $p^{\Pi TM}(c; h\tensor \o\tensor s)$  along $c$ with initial condition
\begin{eqnarray*}
h\tensor\o\tensor s & \in & \G(S, c_0^*\pi^*E)\\
& \cong& \cinf(S)\tensor_{\cinf(\Pi TM)} \G(\Pi TM, \pi^*E)\\
&\cong & \cinf(S)\tensor_{\cinf(\Pi TM)} \cinf(\Pi TM)\tensor_{\cinf(M)} \G(M, E),
\end{eqnarray*}
where $h\in\cinf(S)$, $\o\in\cinf(\Pi TM)\cong\O^*(M)$ and $s\in\G(M, E).$ The map $c_{0}:S\ra \Pi TM$ denotes the restriction of $c$ to $(0,0)\times S$. We define such a parallel section by
\[ p^{\Pi TM}(c; h\tensor \o\tensor s):= p^M(\pi c; c_0^*(\o)h\tensor s), \]
As before, we check that 
\begin{eqnarray*}
p^{\Pi TM}(c\f; h\tensor \omega \tensor s) &=&  p^{M}(\pi c\f; c_0^*(\o)h\tensor s) \\
&= & p^{M}(\pi c; c_0^*(\o) h\tensor s)\comp\f \\
&= & p^{\Pi TM}(c; h\tensor\o\tensor s)\comp\f,
\end{eqnarray*}
for $\f$ an arbitrary family of diffeomorphisms of $\ruu$ preserving the conformal structure and the point $(0,0)$. The second equality holds since the $1|1$-parallel transport on $M$ is invariant under reparametrization. This means that the $1|1$-parallel transport on $\Pi TM$ we constructed is invariant under reparametrization. Compatibility under glueing of superpaths and the identity on constant superpaths are as before obvious. Parallel transport along paths in $\Pi TM$ is dealt with in a similar manner.

We are left to check the odd-triviality of the $1|1$ parallel transport. Let $\bar{\a}_X:\ruu\times \Pi TM\ra \Pi TM$ the flow of the odd vector field $\i_X$ on $\Pi TM$, for $X$ a vector field on $M$. Then 
\begin{eqnarray*}
p^{\Pi TM}(\bar{\a}_X; \pi^*s\in\G(\pi^*E)) &=&  p^{M}(\pi \bar{\a}_X; \pi^*s\in\G(\pi^*E)) \\
&= & \bar{\a}_X^*\pi^*s,
\end{eqnarray*}
since the map $\bar{\a}_X$ factors through $\a_{\i_X}:\rou\times \Pi TM\ra \Pi TM$, and the composition $\pi \bar{\a}_X$ is the uninteresting projection map.\\ 

Now, it is not hard to see that if we apply the construction  ``$\lra$"and then the construction ``$\longleftarrow$", we obtain the identity. To see that the correspondence in the Proposition is one-to-one, we are left to check that the construction ``$\longleftarrow$" is injective. This is a consequence of the following diagram 
\[ \xymatrix{ \{1|1\text{-odd trivial transport in } \Pi TM \} \ar@{<->}[d]_{\text{Prop \ref{3} (to be proven)}} \ar[r] & \{1|1\text{-transport in M} \} \\
\{\text{ odd-trivial connections over } \Pi TM \} \ar@{<->}[r] & \{ \text{even connections over } M \} \ar[u]
}\]
as well as the diagram 
\[ \xymatrix  @C=.2pc { \{ \text{even connections over } M \} \ar[rr] \ar[dr] & & \{1|1\text{-transport in M} \} \ar[dl]\\
& \{1\text{-transport in M} \}  & } \]
being commutative. Now observe that the lower right arrow map in the last diagram is injective since a connection is recovered by its usual parallel transport. Therefore the right arrow map is injective  
(a direct argument of this fact can be also found in Subsection 3.3 of \cite{D1}). This further implies, by looking back at the first diagram, the required injectivity. We conclude that the two constructions are inverses of one another, and so obtain the Proposition.

\end{proof}


\section{An odd-trivial equivalence}

\begin{prop}  \label{3} There is a 1-1 correspondence

\[  \left\{
\begin{array}{l}
1|1 \text{ odd-trivial parallel transport}\\
\text{ \ \ \ \ \ \ on $\pi^*E$ over $\Pi TM$}
\end{array} \right\}
\longleftrightarrow \left\{
\begin{array}{l}
\text{Odd-trivial connections}\\
\text{\ \ \  on $\pi^*E$ over $\Pi TM$}

\end{array} 
\right\} \]

\end{prop}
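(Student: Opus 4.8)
The plan is to prove the two correspondences ``$\lra$'' and ``$\longleftarrow$'' separately and then check that they are mutually inverse; the substance lies entirely in the direction ``$\lra$'' (transport $\to$ connection), since ``$\longleftarrow$'' is the construction of Section 3 of \cite{D1} specialised to $\Pi TM$. For ``$\longleftarrow$'', given an odd-trivial connection $\ti\na$ on $\pi^*E$, that construction builds a $1|1$-parallel transport in which a section along a superpath $c$ is declared parallel exactly when it is annihilated by the pullback of $\ti\na$ in the direction $D=\vd$. I would verify odd-triviality directly from Lemma \ref{ot}: along the flow $\bar\a_X$ of $\i_X$ one has $d\bar\a_X(D)=\i_X$, because $\bar\a_X$ factors through the projection $p:\ruu\ra\rou$, under which $D$ pushes to $\partial_\th$, whose image under the $\rou$-action is the generator $\i_X$; since $\langle\ti\na(\pi^*s),\i_X\rangle=0$, the section $\pi^*s$ is already parallel and the transport is the identity on such sections. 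Along the flow $\a_X$ of the even field $\sL_X$ the condition $\langle\ti\na(\pi^*s),\sL_X\rangle\in\pi^*\G(M;E)$ yields precisely the required landing condition for paths. Reparametrization invariance, gluing, and the identity on constant (super)paths are inherited from \cite{D1}.

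For ``$\lra$'' I would exploit the feature that distinguishes $\Pi TM$ from $M$: an odd vector field $V$ with $V^2=0$ has a flow that is a genuine $\rou$-action $\a_V:\rou\times\Pi TM\ra\Pi TM$, giving a family of superpaths $\bar\a_V$ parametrized by $S=\Pi TM$. Solving the parallel equation along $\bar\a_V$ with initial value $\sigma_0$ produces a section whose expansion in the odd time variable has the shape $\sigma_0-\th\,\mu$, and I would define $\ti\na_V\sigma_0$ to be (the appropriate normalisation of) $\mu$, i.e.\ minus the coefficient of $\th$ in $p^{\Pi TM}(\bar\a_V;\sigma_0)$. Taking $V=\i_X$ and $\sigma_0=\pi^*s$, odd-triviality forces this coefficient to vanish, so $\ti\na_{\i_X}(\pi^*s)=0$. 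For the even directions I would use the flows $\a_X$ of $\sL_X$, which are ordinary $\bR$-actions; differentiating the path-parallel transport at $t=0$ defines $\ti\na_{\sL_X}(\pi^*s)$, and the even odd-triviality requirement places it in $\pi^*\G(M;E)$. I would then extend $\ti\na$ to all sections and all derivations by $\cinf(\Pi TM)=\O^*(M)$-linearity in the vector-field slot and by the Leibniz rule in the section slot, using that the fields $\i_X$ and $\sL_X$ generate all derivations of $\O^*(M)$ over $\O^*(M)$ (with the appropriate parities of coefficients) and that the $\pi^*s$ generate $\G(\pi^*E)$ over $\O^*(M)$. By construction the resulting $\ti\na$ satisfies the two conditions of Lemma \ref{ot}, hence is odd-trivial.

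The main obstacle is verifying that this $\ti\na$ is genuinely a connection: that $V\mapsto\ti\na_V$ is tensorial, so the first-order datum read off a nonlinear flow is in fact $\O^*(M)$-linear in $V$ and independent of the vector field chosen to realise a given direction, and that the Leibniz rule holds in $\sigma_0$. This is the super analogue of the classical fact that reparametrization-invariant parallel transport along paths determines a connection (see \cite{SW1}, \cite{D4}); I would import that statement for the even $\sL_X$-directions, whose flows project to the flows $\underline{\a}_X$ on $M$, and then check that the odd $\i_X$-directions impose no obstruction, precisely because odd-triviality makes them vanish on the generators $\pi^*s$. The reparametrization-invariance, gluing, and $S$-naturality axioms are exactly what I expect to need to push the classical argument through.

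Finally I would show the two constructions are inverse. Applying ``$\longleftarrow$'' and then ``$\lra$'' returns the original $\ti\na$, since by the computation above the $\th$-linear part of the transport built from $\ti\na$ is exactly $\ti\na_V$; in particular the even directions recover $\na$ by the classical equivalence and the odd directions recover the vanishing datum. Conversely, applying ``$\lra$'' and then ``$\longleftarrow$'' returns the original transport, because an odd-trivial $1|1$-transport is reconstructed from its infinitesimal data along flows by solving the parallel equation for the extracted $\ti\na$, the flows $\a_V$ generating arbitrary superpaths up to reparametrization and gluing. This gives the bijection asserted in Proposition \ref{3}.
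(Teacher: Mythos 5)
Your construction of the connection and your ``$\longleftarrow$'' direction follow essentially the same route as the paper: read off $\ti\na_{\i_X}$ and $\ti\na_{\sL_X}$ by differentiating parallel sections along the flows of $\i_X$ and $\sL_X$, then extend to all vector fields by $\O^*(M)$-linearity. Note, however, that the tensoriality step you flag as ``the main obstacle'' is filled in by the paper with concrete flow identities (a Trotter-type formula for flows of sums, and reparametrization formulas for the flow of $fX$ with $f$ even), and these cover cases your plan to ``import the classical result'' does not reach: for instance $\na_{e\cdot\sL_X}=e\cdot\na_{\sL_X}$ for $e$ an arbitrary even-degree form on $M$ is not a statement about reparametrization-invariant transport on $M$, since $e\cdot\sL_X$ is not the lift of any vector field on $M$.

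The genuine gap is in your last paragraph. The claim that ``$\lra$ followed by $\longleftarrow$ is the identity'' does not follow from the assertion that the flows $\a_V$ generate arbitrary superpaths up to reparametrization and gluing. An arbitrary superpath in $\Pi TM$ factors (locally, using naturality in $S$) through a family of flows of \emph{arbitrary} vector fields on $\Pi TM$, not merely flows of the generating fields $\i_X$ and $\sL_X$ from which your connection was extracted. For vector fields with \emph{odd} coefficient functions --- the types $o\cdot\i_X$, $o\cdot\sL_X$ and their sums --- the value of $\ti\na$ was \emph{defined} by linearity rather than read off the transport, and there is no closed formula expressing the flow of $fX$, $f$ odd, through the flow of $X$; so the agreement of the original transport with the transport of $\ti\na$ along such flows is precisely what must be proven, and this is where the bulk of the paper's proof lies. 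The paper handles it by an ``even passage'': for $f\i_X$ with $\i_X(f)=0$ or $\i_X(f)=1$ the flow is written explicitly and factored through $\rou\times\bR\times\Pi TM$, where it becomes the flow of an \emph{even} vector field ($tX$, resp.\ $(e^{t}-1)X$); for $o\cdot\sL_X$ and the mixed sums one pulls back to the intermediate space $\rou\times\Pi TM$ and uses naturality of the transport in the parametrizing supermanifold, reducing again to even vector fields. Without an argument of this kind your reconstruction step is circular: it assumes that an odd-trivial transport is determined by its infinitesimal data along the generating flows, which is essentially the statement being proved.
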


``$\longleftarrow$" It is clear how a connection gives rise to $1|1$-parallel transport: given a superpath $c$ in $\Pi TM$,  pull-back the connection along $c$ and define a section to be parallel along $c$ if it is constant in the direction of the vector field $D$ on $\ruu$.  Moreover, the odd-triviality of the connection implies the odd-triviality of the resulting parallel transport. 

We spend the remaining of this Section going in the other direction ``$\longrightarrow$" and end up showing that the two arrows are inverse to each other. We start off by lifting the action of vector fields of the type $\sL_X$ and $\iota_X$ on $\Pi TM$, for $X$ vector fields on $M$,  to actions on the total space of the bundle $\pi^*E$, which by differentiation gives us a compatible (under summation and function multiplication of vector fields) family of derivations, i.e. a connection on $\pi^* E$. 
In order to lift such actions we make some preliminary remarks on flows of vector fields in Subsection \ref{flows}, which are of independent interest, and then combine the even-odd rules of Subsection \ref{eorules} to obtain the algebraic properties of a connection.


\subsection{Remarks on flows of vector fields} \label{flows}
In this subsection we find a Trotter type formula relating the flow of the sum of two vector fields $X$ and $Y$, in terms of the flows of $X$ and $Y$, as well as a relation between the flow of $X$ and the flow of $fX$, for $f$ a function on the manifold. There is a definite advantage to express geometrically these algebraic operations from a field theoretic perspective. 

\begin{prop} Let $X$ and $Y$ be vector fields on $M$, and let $\a, \b:\bR\times M\ra M$ denote the flows determined by $X$, respectively $Y$. Then the flow $\g$ of the vector field $X+Y$ is given by 
\[ \g_t(x)= \lim_{n\to\infty}  \underbrace{(\a_{\frac{t}{n}}\b_{\frac{t}{n}})\comp \ldots \comp (\a_{\frac{t}{n}}\b_{\frac{t}{n}})}_{n}(x). \]

\end{prop}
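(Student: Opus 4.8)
The plan is to prove this Lie--Trotter product formula by the standard telescoping estimate, which has the virtue of simultaneously establishing that the limit exists and identifying it with $\g_t$. Since $M$ is compact, the three flows $\a$, $\b$, $\g$ are all complete, so every map appearing is defined for all times. Fixing $t$ and $x$, the entire trajectory $\{\g_\s(x): 0\le\s\le t\}$ lies in a compact set $K$; I would cover a neighborhood of $K$ by finitely many coordinate charts (equivalently, embed $M$ in some $\bR^N$ by Whitney) to obtain \emph{uniform} bounds, and carry out all estimates in these local coordinates, where $\|\cdot\|$ denotes the Euclidean norm. Throughout I write $\f_\s:=\a_\s\comp\b_\s$ for the one-step map.

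First I would record the one-step expansion. Taylor expanding in $\s$ gives $\b_\s(y)=y+\s Y(y)+O(\s^2)$ and $\a_\s(z)=z+\s X(z)+O(\s^2)$, and composing yields
\[ \f_\s(y)=y+\s\,(X+Y)(y)+O(\s^2), \]
uniformly for $y$ near $K$. The genuine flow of $X+Y$ obeys the same first-order expansion, $\g_\s(y)=y+\s\,(X+Y)(y)+O(\s^2)$, so the two agree to first order and the single-step discrepancy is quadratically small:
\[ \|\f_\s(y)-\g_\s(y)\|\le C\,\s^2, \]
for a constant $C$ uniform on $K$.

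The key step is then to compare the $n$-fold iterate $\f_{t/n}^{\comp n}$ with $\g_{t/n}^{\comp n}=\g_t$, the last equality being the one-parameter group property of the true flow. Setting $A:=\f_{t/n}$, $B:=\g_{t/n}$, and $h:=t/n$, the telescoping identity
\[ A^{\comp n}(x)-B^{\comp n}(x)=\sum_{k=0}^{n-1}\Big[A^{\comp(n-1-k)}\big(A(B^{\comp k}(x))\big)-A^{\comp(n-1-k)}\big(B(B^{\comp k}(x))\big)\Big] \]
reduces the total error to a sum of single-step errors, each amplified by a Lipschitz factor. Since $DA=D\a_h\cdot D\b_h=I+O(h)$, each $A$ is $(1+O(h))$-Lipschitz, hence every iterate $A^{\comp j}$ with $j\le n$ has Lipschitz constant at most $e^{Lt}$ for a uniform $L$. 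Combining this with the quadratic single-step bound gives
\[ \|\f_{t/n}^{\comp n}(x)-\g_t(x)\|\le n\cdot e^{Lt}\cdot C h^2=C\,e^{Lt}\,\frac{t^2}{n}\xrightarrow[n\to\infty]{}0, \]
which proves both that the limit exists and that its value is $\g_t(x)$, with convergence of order $O(1/n)$.

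The main obstacle is not conceptual but one of uniformity: on a manifold there is no intrinsic norm, so the remainder constant $C$ and the Lipschitz constant $e^{Lt}$ must be controlled uniformly along the whole trajectory before the telescoping sum of $n$ terms of size $O(h^2)$ can be shown to vanish. Compactness of $M$ is precisely what guarantees completeness of the flows and, after localizing to finitely many charts, uniform control of all these constants; granting that, the remaining verifications are routine.
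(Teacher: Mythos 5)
Your proof is correct, and it takes a genuinely different route from the paper's. The paper argues structurally: it computes that each iterate $(\alpha_{t/n}\beta_{t/n})^{\circ n}$ has $t$-derivative $(X+Y)(x)$ at $t=0$, establishes the one-parameter group law $\gamma_t\gamma_s=\gamma_{t+s}$ for the limiting family (first via the rescaling identity $\gamma_t=\gamma_{t/n}^{\circ n}$ and rationals, then by continuity), and handles existence of the limit separately, by a brief appeal to Taylor expansion in $t$ with convergent coefficients; the identification with the flow of $X+Y$ then rests implicitly on uniqueness of flows with a given generator. Your telescoping argument instead compares the iterate directly with the true flow written as $\gamma_t=\gamma_{t/n}^{\circ n}$: a quadratic one-step error $O(h^2)$, amplified by the uniform Lipschitz constant $e^{Lt}$ of the iterates, sums to $O(1/n)$. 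What your approach buys: existence of the limit and its identification come in one stroke, with an explicit convergence rate, and the convergence is in fact uniform in $x$; moreover, the analytic points the paper leaves sketchy (existence of the limit, and the interchange of $\lim_n$ with $\frac{d}{dt}\big|_{t=0}$ needed to see that the limit group is generated by $X+Y$) are carried out explicitly. What the paper's approach buys: it is softer, with no Lipschitz bookkeeping, and its algebraic character is what lets the author assert in the following remark that the proof transfers word-for-word to even vector fields on a compact supermanifold, where pointwise Euclidean estimates are less natural. The one caution on your side is the uniformity issue you already flagged: since $M$ is compact you may simply take the constants $C$ and $L$ uniform over all of $M$ (via a Whitney embedding or a Riemannian distance), which also guarantees that every intermediate point in the telescoping sum, not just the true trajectory, lies where the bounds hold; with that said, your argument is complete.
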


\begin{proof} Let us begin with a calculation:

\begin{eqnarray*}
\frac{d}{dt}\Big |_{t=0}(\a_t\comp\b_t)(x) &=& \frac{d}{dt}\Big |_{t=0} \a(t,\b(t,x))\\
&=& \frac{\partial}{\partial t}\Big |_{t=0} \a(t,x)+\sum_{i=1}^n\frac{\partial \a}{\partial x^i}(0,x)
 \frac{\partial}{\partial t}\Big |_{t=0} \b^i(t,x)\\
 &=& (X+Y)(x).
\end{eqnarray*}
By a similar calculation, we have
\[ \frac{d}{dt}\Big |_{t=0}(\a_{\frac{t}{2}}\b_{\frac{t}{2}})\comp  (\a_{\frac{t}{2}}\b_{\frac{t}{2}})(x)= (X+Y)(x), \]
and more generally
\[ \frac{d}{dt}\Big |_{t=0} \underbrace{(\a_{\frac{t}{n}}\b_{\frac{t}{n}})\comp \ldots \comp (\a_{\frac{t}{n}}\b_{\frac{t}{n}})}_{n}(x)= (X+Y)(x), \]
for any $n$. Next, we will show the group property for the family $\{\g_t\}$. To simplify notation, denote $\underbrace{f\comp \ldots \comp f}_n$ by $f^{(n)}$. We then have

\begin{eqnarray*}
\g_{2t} &=& \lim_{n\to\infty} (\a_{2t/2n}\b_{2t/2n})^{(2n)}\\
&=& \lim_{n\to\infty} (\a_{t/n}\b_{t/n})^{(n)} (\a_{t/n}\b_{t/n})^{(n)}\\
&=& \g_t\g_t.
\end{eqnarray*} 
By a similar calculation, we obtain $\g_{3t}=\g_t\g_t\g_t$, and more generaly
\[ \g_t= \g_{t/n}^{(n)}, \text{ for all } n\geq 1. \]
This implies that
\[ \g_t\g_s= \g_{t+s}, \]
for all $t,s$ rational numbers, and, by continuity, for all $t,s$ real numbers. \\
Note that the limit in the statement of the proposition exists, as one can check for example by a Taylor expansion in $t$, for a fixed $x\in M$, and verifying that the Taylor coefficients converge.

\end{proof}

\begin{rem} A word-for-word translation of the proof above shows that the same result holds for $X$ and $Y$ even vector fields on a compact supermanifold $M$.
\end{rem}

Consider now $X$  a vector field on a (compact) manifold $M$. This determines an odd vector field $\iota_X$ on $\Pi TM$ that squares to zero. Its flow is reduced to a map $\a:\rou\times\Pi TM\ra \Pi TM$ given by 
\[ \a^*:\O^*M\ra \O^*M[\th]:\ \  \o\mapsto \o+(\iota_X\o)\th. \]

\begin{lem}
Let $X$ and $Y$ be vector fields on $M$ and $\iota_X$, $\iota_Y$ the corresponding odd vector fields on $\Pi TM$ with flow maps $\rou\times\Pi TM\ra \Pi TM$ denoted by $\a$ and $\b$. Then the flow $\g$ of $\ \iota_X+\iota_Y$ is given by 
\[ \g:\rou\times\Pi TM\ra \Pi TM,\ \ \g=\b\comp(1\times\a)\comp(\D\times 1), \]
where $\D:\rou\ra\rou\times\rou$ is the diagonal map. On $S$-points, this means
\[ \g(\th,x)=\b(\th,\a(\th,x)). \]
\end{lem}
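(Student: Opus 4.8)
The plan is to verify the claimed formula for $\g$ by computing its pullback on functions and matching it against the known pullback of the flow of $\iota_X+\iota_Y$. First I would recall the characterization of flows used just above in the text: since $\iota_X+\iota_Y=\iota_{X+Y}$ is a contraction it squares to zero, so by the correspondence between nilpotent odd vector fields and $\rou$-actions (\cite{DM}, and Section 2.6 of \cite{D1}) its flow exists and is the \emph{unique} map $\rou\times\Pi TM\lra\Pi TM$ whose pullback on $\o\in\O^*M=\cinf(\Pi TM)$ is $\o\mapsto\o+((\iota_X+\iota_Y)\o)\th$. It therefore suffices to show that the composite $\g=\b\comp(1\times\a)\comp(\D\times1)$ has exactly this pullback; no separate check that $\g$ is a map of supermanifolds is needed, since it is built out of the valid maps $\b$, $1\times\a$, and $\D\times1$.

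Second, I would dualize the composition, using $(g\comp f)^*=f^*\comp g^*$, to obtain $\g^*=(\D\times1)^*\comp(1\times\a)^*\comp\b^*$, and evaluate it step by step while carefully tracking the two odd coordinates: let $\th_1$ be the coordinate on the $\rou$ in the source of $\b$ and $\th_2$ the coordinate on the $\rou$ in the source of $\a$. Starting from the given formulas $\a^*(\o)=\o+(\iota_X\o)\th_2$ and $\b^*(\o)=\o+(\iota_Y\o)\th_1$, applying $\b^*$ yields $\o+(\iota_Y\o)\th_1$; then applying $(1\times\a)^*$, which is the identity on the $\th_1$-factor and $\a^*$ on the $\Pi TM$-factor, produces
\[ \o+(\iota_X\o)\th_2+(\iota_Y\o)\th_1+(\iota_X\iota_Y\o)\th_2\th_1. \]

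Third, I would apply $(\D\times1)^*$, which identifies $\th_1=\th_2=\th$. The crucial point is that the only term quadratic in the odd coordinates is proportional to $\th_2\th_1$, and under the diagonal it becomes $\th^2=0$ and drops out, leaving $\g^*(\o)=\o+((\iota_X+\iota_Y)\o)\th$, as required; the $S$-point statement $\g(\th,x)=\b(\th,\a(\th,x))$ is simply this composition read off on points. By the uniqueness recalled in the first step, $\g$ is then the flow of $\iota_X+\iota_Y$.

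The main obstacle I anticipate is purely bookkeeping: keeping the signs straight when moving the odd quantities $\iota_Y\o$ and $\th_1$ past the $\th_2$-linear terms, and confirming that the quadratic contribution is genuinely a multiple of $\th_1\th_2$ rather than something that could survive the diagonal. It is reassuring that the surviving answer is symmetric in $X$ and $Y$ even though $\iota_X$ and $\iota_Y$ only anticommute: the asymmetric cross term $(\iota_X\iota_Y\o)\th_2\th_1$ is exactly the piece annihilated by $\th^2=0$, so the two possible orders of composition give the same flow, consistent with $\iota_X+\iota_Y=\iota_Y+\iota_X$.
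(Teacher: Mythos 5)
Your proposal is correct and follows essentially the same route as the paper's own proof: both dualize the composition to $\g^*=(\D\times1)^*\comp(1\times\a)^*\comp\b^*$, compute $\o\mapsto\o+\iota_X\o\,\th_2+(\iota_Y\o+\iota_X\iota_Y\o\,\th_2)\th_1$, and observe that the cross term $\iota_X\iota_Y\o\,\th_2\th_1$ is killed by the diagonal identification $\th_1=\th_2=\th$ since $\th^2=0$. The only difference is cosmetic: you spell out the uniqueness of the flow as a map with prescribed pullback $\o\mapsto\o+((\iota_X+\iota_Y)\o)\th$, which the paper uses implicitly via the characterization stated just before the lemma.
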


\begin{proof}
We have to check that the following diagram commutes
\[ \xymatrix@C=4pc{\rou\times \Pi TM \ar[r]^-\g \ar[d]_{\D\times 1} & \Pi TM \\
\rou\times\rou\times \Pi TM  \ar[r]_-{1\times\a}   &   \rou\times \Pi TM. \ar[u]_\b } \]
This, on functions, translates into commutativity of the diagram
\[ \xymatrix @C=5pc{  \o+\iota_X\o\th+\iota_Y\o\th & \o \ar@{|->}[l]_-{\g^*} \ar@{|->}[d]^{\b^*} \\
\o+\iota_X\o\th_2+(\iota_Y\o+\iota_X\iota_Y\o\th_2)\th_1\ar@{|->}[u]^{\th_1=\th_2} & \o+\iota_Y\o\th_1. \ar@{|->}[l]^-{\a^*} } \]
\end{proof}

\begin{rem} \label{ree}
The same proof shows that if $X$ and $Y$ are two odd vector fields on a supermanifold that square to zero and their Lie bracket $[X,Y]$ is also zero, then the sum $X+Y$ is an odd vector field that squares to zero and  its flow (an $\rou$-action) is the composition of the flows of $X$ and $Y$.
\end{rem}

\vspace{.1cm}

\begin{lem}  Let $\a:\bR\times M\ra M$ be the flow of a vector field $X$ on the compact manifold $M$. If $f$ is a positive function on $M$ then the flow of $fX$ is given by $$\b:\bR\times M \ra M: (t,x)\mapsto \a(s(t,x),x),$$ where $s:\bR\times M \ra \bR$ is the solution to

$$\left\{ \begin{array}{l} 
\frac{\partial s}{\partial t}(t,x)= f(\a(s(t,x),x))
\\\\
s(0,x)=0, \text{ for all } x.

\end{array} \right.$$

\end{lem}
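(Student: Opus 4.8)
The plan is to verify directly that the proposed map $\b$ is the integral-curve reparametrization that solves the defining ODE of the flow of $fX$, and then to conclude by uniqueness of integral curves. Recall that $\a$ being the flow of $X$ means precisely $\frac{\partial}{\partial s}\a(s,x)=X(\a(s,x))$ together with $\a(0,x)=x$, and that on the compact manifold $M$ every vector field is complete, so both $X$ and $fX$ admit globally defined flows; the content of the lemma is to \emph{identify} the latter with the explicit reparametrization.

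First I would settle the only genuinely analytic point, namely the global existence and smooth parameter-dependence of $s(t,x)$. Fix $x$ and note that the scalar equation $\frac{\partial s}{\partial t}=f(\a(s,x))$ separates, giving $t=\int_0^{s}\frac{d\sigma}{f(\a(\sigma,x))}=:G_x(s)$. Since $M$ is compact and $f$ is positive, there are constants $0<c\le C$ with $c\le f\le C$, so the integrand lies in $[1/C,1/c]$; hence $G_x\colon\bR\to\bR$ is a smooth, strictly increasing bijection and $s(t,x):=G_x^{-1}(t)$ is defined for all $t\in\bR$. Joint smoothness of $s$ in $(t,x)$ follows from smoothness of $G$ in $(s,x)$ together with $G_x'(s)=1/f(\a(s,x))>0$ via the inverse function theorem with parameters. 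This simultaneously records $s(0,x)=0$ and $\frac{\partial s}{\partial t}=1/G_x'(s)=f(\a(s,x))$, i.e. the stated initial value problem, with the integral curve $\a(\cdot,x)$ defined on the full range of $\sigma$ by completeness of $X$.

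The remaining verification is a one-line chain rule. Differentiating $\b(t,x)=\a(s(t,x),x)$ in $t$ and invoking the flow property of $\a$,
\[ \frac{\partial}{\partial t}\b(t,x)=\frac{\partial\a}{\partial s}(s(t,x),x)\cdot\frac{\partial s}{\partial t}(t,x)=X(\b(t,x))\cdot f(\a(s(t,x),x))=f(\b(t,x))\,X(\b(t,x)), \]
which is exactly $(fX)(\b(t,x))$; combined with $\b(0,x)=\a(0,x)=x$, this shows $t\mapsto\b(t,x)$ is the integral curve of $fX$ through $x$. By uniqueness of solutions to ODEs these curves coincide with the flow of $fX$, whence $\b$ is that flow. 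The main obstacle, to the extent there is one, is thus entirely in the previous paragraph: positivity of $f$ is what makes $s$ a genuine orientation-preserving reparametrization defined on all of $\bR$, and compactness of $M$ is what bounds $f$ away from $0$ and $\infty$ and guarantees completeness; the differential-geometric step is the trivial computation above.
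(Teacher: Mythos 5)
Your proof is correct, and it is precisely the ``routine check'' that the paper alludes to: the paper itself gives no argument beyond that phrase, and your verification (chain rule plus uniqueness of integral curves) is the expected one. Your additional care in establishing global existence and smoothness of $s(t,x)$ via the separated integral $t=\int_0^s d\sigma/f(\a(\sigma,x))$, using positivity of $f$ and compactness of $M$, is a welcome filling-in of the only nontrivial analytic point.
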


\noindent The proof is a routine check.

\begin{cor}  Let $X$ and $Y$ be vector fields on $M$. Then $X$ and $Y$ have the same (directed) trajectories if and only if $Y=fX$, for some positive function $f$ on $M$. 

\end{cor}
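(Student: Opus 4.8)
The plan is to read the statement as one about integral curves up to orientation-preserving reparametrization, and to treat the two implications separately. Write $\a,\b:\bR\times M\ra M$ for the flows of $X$ and $Y$. The implication ``$\Leftarrow$'' is immediate from the preceding Lemma: if $Y=fX$ with $f$ positive, that Lemma gives $\b(t,x)=\a(s(t,x),x)$, where $s$ solves $\partial s/\partial t=f(\a(s,x))$ with $s(0,x)=0$. Since $f>0$, for each fixed $x$ the map $t\mapsto s(t,x)$ is a strictly increasing diffeomorphism onto its image, so the integral curve $t\mapsto\b(t,x)$ of $Y$ is an orientation-preserving reparametrization of the integral curve $\tau\mapsto\a(\tau,x)$ of $X$. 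Hence $X$ and $Y$ sweep out exactly the same directed trajectories (and at a zero of $X$ both fields vanish, giving the same constant trajectory).

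For ``$\Rightarrow$'' I would argue as follows. A point $x$ has a degenerate (constant) trajectory precisely when the field vanishes there, so sharing directed trajectories forces the zero sets of $X$ and $Y$ to coincide; let $U=\{X\neq0\}=\{Y\neq0\}$ be their common open complement. On $U$ the common directed trajectory through each point $x$ is an immersed curve with a well-defined positive tangent direction, and both $X(x)$ and $Y(x)$ are nonzero vectors tangent to it pointing the same way; therefore $Y(x)=f(x)X(x)$ for a unique scalar $f(x)>0$. Smoothness of $f$ on $U$ I would get from the flow-box theorem: near any point of $U$ choose coordinates straightening $X$ to $\partial/\partial x^1$, so that the trajectories of $X$ are the coordinate lines in the $x^1$-direction; for $Y$ to share them it must take the form $Y=f\,\partial/\partial x^1$ with $f=Y^1$ its (smooth) first component, positive since the orientations agree. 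Thus $Y=fX$ with $f$ smooth and positive on $U$.

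The hard part is the behavior of $f$ across the zero set $Z=M\setminus U$. There $X$ and $Y$ both vanish, so the relation $Y=fX$ holds trivially but does not determine $f$, and in general the forced factor need not extend to a positive smooth function: on $M=\bR$ the fields $X=x\,\partial_x$ and $Y=x^3\,\partial_x$ have identical directed trajectories, yet the ratio $f=x^2$ vanishes at the origin. Accordingly, the clean statement I would actually prove assumes $X$ (equivalently $Y$) nowhere vanishing --- the relevant case for reparametrization of honest non-constant paths --- whence $U=M$ and the argument above yields a globally defined smooth positive $f$. Retaining zeros, the correct reading is that $f$ is positive on $U$ with trajectories agreeing there; obtaining a smooth positive extension over $Z$ requires extra hypotheses matching the order of vanishing of $X$ and $Y$, and this matching at critical points (or the nonvanishing assumption that sidesteps it) is where the genuine work lies.
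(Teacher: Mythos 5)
Your proof is correct, and it is in fact more careful than the paper, which states this corollary without any proof, as an immediate consequence of the preceding lemma. That lemma only yields the direction you also derive from it: if $Y=fX$ with $f>0$, the flow of $Y$ is the orientation-preserving reparametrization $\beta(t,x)=\alpha(s(t,x),x)$ of the flow of $X$, so the directed trajectories agree. For the converse the paper supplies no argument at all; your flow-box argument on the open set where the fields are nonzero (first matching the zero sets, then straightening $X$ to $\partial/\partial x^1$ and reading off $f=Y^1>0$) is the natural one and is sound. More importantly, your counterexample $X=x\,\partial_x$, $Y=x^3\,\partial_x$ --- or, to respect the paper's standing compactness assumption, $X=\sin\theta\,\partial_\theta$ and $Y=\sin^3\theta\,\partial_\theta$ on $S^1$ --- shows that the ``only if'' direction is genuinely false as stated once zeros are allowed: any $f$ with $Y=fX$ must equal $x^2$ (respectively $\sin^2\theta$) off the zero set, hence vanishes on it by continuity, so no positive $f$ exists. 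Thus the corollary needs the hypothesis that $X$ (equivalently $Y$) is nowhere vanishing, or the weaker conclusion that $f$ is positive only off the common zero set. This restriction is harmless for the paper's purposes, since the corollary only serves to motivate reparametrization-invariance of transport along non-constant integral curves, but what you have identified is a defect in the statement itself, not a gap in your argument.
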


\begin{cor} If $Y=fX$, for some positive function $f$ on $M$, and $c$ is an integral curve of $X$ then $c\comp \f$ is an integral curve of $Y$, for some (orientation-preserving) diffeomorphism $\f$ of $\bR$.
\end{cor}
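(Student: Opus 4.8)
The plan is to exhibit $\f$ explicitly as the solution of a first-order ODE dictated by the chain rule, and then to use compactness of $M$ to promote that solution to a genuine diffeomorphism of all of $\bR$. First I would be guided by the computation
\[ (c\comp\f)'(t)=\f'(t)\,c'(\f(t))=\f'(t)\,X\big(c(\f(t))\big), \]
which I want to match with the integral-curve equation for $Y=fX$, namely $(c\comp\f)'(t)=f\big((c\comp\f)(t)\big)\,X\big((c\comp\f)(t)\big)$. This suggests \emph{defining} $\f$ to be the solution of the autonomous ODE
\[ \f'(t)=f\big(c(\f(t))\big),\qquad \f(0)=0, \]
after which the displayed computation verifies directly, with no hypothesis on the zeros of $X$, that $c\comp\f$ is an integral curve of $Y$.

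Next I would solve this equation and record its qualitative features. Writing $g(\t):=f(c(\t))$, a strictly positive function because $f>0$, the equation reads $\f'=g(\f)$; separating variables and setting $G(\t):=\int_0^{\t}\frac{du}{g(u)}$ gives $\f=G^{-1}$. Since $g>0$ we have $G'=1/g>0$, so both $G$ and $\f$ are strictly increasing, which is exactly the assertion that $\f$ is orientation-preserving; this is the only place the positivity of $f$ is used.

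The one genuine subtlety, and the step I expect to be the main obstacle, is to check that $\f$ is a diffeomorphism of \emph{all} of $\bR$ rather than of some proper subinterval. This is exactly where compactness of $M$ enters, on two counts: it guarantees that the integral curve $c$ is complete, so that $g$ is defined on all of $\bR$, and it forces $0<a\le f\le b$ for constants $a,b$, whence $a\le g\le b$ and $1/b\le G'\le 1/a$. Integrating the latter gives $G(\t)\to\pm\infty$ as $\t\to\pm\infty$, so $G$ is a bijection of $\bR$ onto $\bR$; therefore so is its inverse $\f$, which together with the monotonicity above is an orientation-preserving diffeomorphism of $\bR$, as required.

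Finally I would reconcile this with the preceding lemma to phrase the result in the notation already in play. That lemma expresses the flow of $Y=fX$ as $\b(t,x)=\a(s(t,x),x)$, so the integral curve of $Y$ through a fixed $x$ is $t\mapsto c(s(t,x))$ with $c=\a(\cdot,x)$; setting $\f(t):=s(t,x)$, the defining system $\partial_t s=f(\a(s,x))$, $s(0,x)=0$ is precisely the ODE above, confirming that the reparametrization produced here is the one already implicit in the lemma.
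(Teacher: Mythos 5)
Your proof is correct and is essentially the paper's own (implicit) argument: the paper states this corollary without proof as an immediate consequence of the preceding lemma, whose reparametrization $s(\cdot,x)$ solving $\partial_t s=f(\a(s,x))$, $s(0,x)=0$ is exactly the $\f$ you construct, as you yourself note in your final paragraph. Your additional verifications --- monotonicity from $f>0$, and completeness plus surjectivity of $\f$ from the bounds $0<a\le f\le b$ on the compact $M$ --- are precisely the details the paper leaves to the reader.
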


When $M$ is a supermanifold, the situation is more involved. We still have as before

\begin{lem} \label{ee} Let $\a:\bR\times M\ra M$ be the flow of an \textnormal{even} vector field $X$ on the compact supermanifold $M$. If $f$ is a positive \textnormal{even} function on $M$ then the flow of $fX$ is given by $$\b:\bR\times M \ra M: (t,x)\mapsto \a(s(t,x),x),$$ where $s:\bR\times M \ra \bR$ is the solution to

$$\left\{ \begin{array}{l} 
\frac{\partial s}{\partial t}(t,x)= f(\a(s(t,x),x))
\\\\
s(0,x)=0, \text{ for all } x.

\end{array} \right.$$

\end{lem}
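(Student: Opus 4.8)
The plan is to follow the classical lemma that precedes it, verifying directly that the map $\b(t,x)=\a(s(t,x),x)$ is the flow of $fX$, while paying attention to the two spots where the super structure actually enters: the solvability of the defining super-ODE for $s$, and the chain rule used in the verification. Since $f$ and $X$ are even and $s:\bR\times M\ra\bR$ is an even function, no Koszul signs intervene in the time direction, so the algebra of the computation will look formally identical to the manifold case; the work is in justifying that the formal steps are legitimate on a supermanifold.

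First I would establish that the super-ODE $\partial_t s = f(\a(s,x))$ with $s(0,x)=0$ admits a unique global even solution $s$. Here $\a(s,x)$ denotes the composite $\a\comp(s,\mathrm{pr}_M):\bR\times M\ra \bR\times M\ra M$, which is well defined precisely because $s$ is even: a smooth function of an even argument is given by Taylor expansion in the nilpotent part of that argument. To produce $s$, I would work order by order in the powers of the nilpotent ideal of $\cinf(M)$. Reducing modulo the odd ideal yields the ordinary ODE $\partial_t \bar s = \bar f(\bar\a(\bar s,\bar x))$ on the underlying compact manifold $M_{red}$, which has a unique solution for all $t$ because positivity of $f$ means $\bar f$ is bounded between two positive constants (so $\bar s$ neither blows up nor stalls) while $\bar\a$ is complete. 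The successive nilpotent corrections are then pinned down by linear inhomogeneous ODEs whose coefficients are built from derivatives of $f$ and $\a$ along the already-known lower-order data; each such linear ODE has a unique global solution, so $s$ exists and is unique. This existence argument is the main obstacle, since the classical existence-and-uniqueness theorem for ordinary differential equations does not apply verbatim on a supermanifold.

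With $s$ in hand, I would check the two flow axioms for $\b$. The initial condition is immediate: $\b(0,x)=\a(0,x)=x$ because $s(0,x)=0$. For the flow equation I would compute on functions, writing $\a_\tau^* g$ for the $\tau$-dependent family characterized by $\partial_\tau(\a_\tau^* g)=\a_\tau^*(Xg)$, the defining property of the flow of $X$. Substituting the even function $s$ into the time slot and applying the ordinary chain rule — valid with no sign because $s$ is even — gives
\[ \partial_t(\b^* g)= \big(\a_{s}^*(Xg)\big)\cdot \partial_t s = \b^*(Xg)\cdot f(\a(s,x))=\b^*(f)\,\b^*(Xg)=\b^*\big((fX)g\big), \]
where the final steps use that $f$ is even, that $f(\a(s,x))=\b^* f$, and that $\b^*$ is an algebra homomorphism. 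Hence $\partial_t(\b^* g)=\b^*((fX)g)$ for all $g$, so $\b$ is the $\bR$-action generated by $fX$, i.e. its flow, proving the lemma. I would close by noting that completeness of $\b$ for all $t\in\bR$ follows from the same two-sided bound $0<c_1\le \bar f\le c_2$ together with compactness of $M_{red}$, so that $fX$ is a complete even vector field and its flow is globally defined.
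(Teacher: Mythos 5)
Your proposal is correct. Note that the paper itself gives no proof of this lemma: the classical (purely even) version is dismissed with ``the proof is a routine check,'' and the super version is simply asserted to hold ``as before.'' Your chain-rule verification that $\partial_t(\b^*g)=\b^*((fX)g)$ is exactly that routine check, carried out correctly --- the observation that $f$, $s$, and $\partial_t s$ are even, so that no Koszul signs appear and even functions commute with everything, is precisely why the computation is formally identical to the manifold case. What you add beyond the paper is the existence and uniqueness of the global solution $s$ to the super-ODE, handled by reducing modulo the nilpotent ideal to a classical ODE on $M_{red}$ (where positivity of $\bar f$ and compactness give global solvability) and then solving linear inhomogeneous ODEs order by order along the nilpotent filtration; this is the one point where the super structure could genuinely cause trouble, and it is a worthwhile supplement to the paper's terse treatment.
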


Let now $f$ be a positive {\it even} function and $X$ be an {\it odd} vector field with flow $\a:\ruu\times M\ra M$ on the supermanifold $M$. Let $\f:\ruu\times M\ra \ruu$ be a family of diffeomorphisms of $\ruu$ parametrized by $M$ that preserves the 1-dimensional distribution determined by the vector field $D$ on $\ruu$ so that
\[ (D\tensor 1)\comp \f^*=M_{f\a(\f\times1)(1\times\D)}\comp\f^*\comp D. \]
Here $f\a(\f\times1)(1\times\D):\ruu\times M\ra \ruu$ is an even function on $\ruu\times M$, and $M_g$ denotes multiplication by the function $g$. Then we have the following

\begin{lem} \label{eo} The flow of the odd vector field $fX$ is given by the map 
$$\b:\ruu\times M\ra M, \ \ \b=\a(\f\times1)(1\times \D), $$ 
or, on $S$-points,
\[ \b(t,\th,x)= \a(\f(t,\th,x),x). \]
\end{lem}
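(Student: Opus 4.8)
The plan is to characterize $\b$ as \emph{the} flow of $fX$ by the two defining properties of the flow of an odd vector field $Z$ on $M$ (see Section 2.6 of \cite{D1}): the initial condition that the flow restrict to the identity over the origin $(0,0)\in\ruu$, and the infinitesimal generator equation $(D\tensor 1)\comp Z\text{-flow}^*=Z\text{-flow}^*\comp Z$ on $\cinf(M)$, where $D\tensor1$ denotes $D=\vd$ acting in the $\ruu$-directions of $\ruu\times M$. Since such a flow is unique, it suffices to verify these two properties for the candidate map $\b=\a\comp\Psi$, where $\Psi:=(\f\times1)\comp(1\times\D)$ and $Z=fX$. The hypotheses hand us $\a$ as the flow of the odd $X$, hence $(D\tensor1)\comp\a^*=\a^*\comp X$, and $\f$ satisfying the displayed conformal scaling relation.

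First I would record the elementary but crucial observation that $\b^*f=f\comp\b=f\a(\f\times1)(1\times\D)$ is \emph{precisely} the multiplier $M_{(\cdots)}$ appearing in the defining relation of $\f$. The initial condition is then immediate once $\f$ is normalized by $\f(0,0,x)=(0,0)$ (the odd analogue of the condition $s(0,x)=0$ in Lemma \ref{ee}): under this normalization $\Psi$ fixes $\{(0,0)\}\times M$ pointwise, so $\b(0,0,x)=\a(0,0,x)=x$.

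The technical heart is to promote the scaling relation from $\f^*$ to $\Psi^*$, that is, to prove
\[ (D\tensor1)\comp\Psi^*=M_{\b^*f}\comp\Psi^*\comp(D\tensor1) \]
as operators on $\cinf(\ruu\times M)$. Because $\Psi$ reparametrizes only the $\ruu$-factor while leaving $M$ untouched, it suffices to check this on decomposable functions $h\tensor k$ with $h\in\cinf(\ruu)$ and $k\in\cinf(M)$: there $\Psi^*(h\tensor k)=(\f^*h)\cdot\pi^*k$, the odd derivation $D$ annihilates $\pi^*k$ (a function pulled back from $M$), so by the Leibniz rule $(D\tensor1)\Psi^*(h\tensor k)=\big((D\tensor1)\f^*h\big)\cdot\pi^*k$, and the defining relation of $\f$ converts this into $(\b^*f)\cdot(\f^*Dh)\cdot\pi^*k=M_{\b^*f}\Psi^*\big((D\tensor1)(h\tensor k)\big)$. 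Extending by bilinearity and continuity yields the displayed identity. The one point demanding care is the sign bookkeeping for the odd derivation $D$ in the Leibniz step, which closes exactly because the factor killed by $D$ is pulled back from $M$.

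Finally I would compose: applying the promoted relation to $\a^*g$ and then using that $\a$ is the flow of $X$,
\[ (D\tensor1)\b^*g=(D\tensor1)\Psi^*\a^*g=(\b^*f)\,\Psi^*\big((D\tensor1)\a^*g\big)=(\b^*f)\,\Psi^*\a^*(Xg)=\b^*(f\cdot Xg)=\b^*\big((fX)g\big), \]
which is precisely the generator equation for $fX$. Together with the initial condition and uniqueness of flows, this identifies $\b$ with the flow of $fX$. I expect the main obstacle to be less any single computation than assembling the correct abstract characterization of odd flows and their uniqueness (so that verifying the generator equation suffices), together with the parity-sensitive extension of the scaling relation to $\Psi^*$. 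One should also keep in mind that $\f$ satisfying the \emph{implicit} relation — whose multiplier $\b^*f$ depends on $\b$, hence on $\f$ itself — is a super-ODE whose solvability is the odd counterpart of the equation for $s(t,x)$ in Lemma \ref{ee}, and this is what guarantees that $\b$ is globally defined on $\ruu\times M$.
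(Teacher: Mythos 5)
Your proposal is correct and takes essentially the same route as the paper: both verify the generator identity $(D\tensor 1)\comp\b^* = \b^*\comp (fX)$ by unpacking $\b^* = (1\tensor\D^*)\comp(\f^*\tensor 1)\comp\a^*$, applying the defining scaling property of $\f$ to the $\ruu$-factor and the flow equation of $\a$, with your intermediate commutation relation for $\Psi^*$ being a repackaging of the paper's left-hand-side computation. Your explicit handling of the initial condition, uniqueness of flows, and the density/continuity extension from decomposable functions are careful additions the paper leaves implicit, but they do not change the substance of the argument.
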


\begin{proof}
This is just a calculation. We have to check that 
\[(D\tensor 1)\comp\b^*= \b^*\comp fX. \]
Now
\begin{eqnarray*}
LHS &=& (D\tensor 1)\comp (1\tensor \D^*)\comp(\f^*\tensor 1)\comp \a^* \\
&=& (1\tensor \D^*)\comp(D\tensor 1\tensor 1)\comp(\f^*\tensor 1)\comp \a^*\\
&=& (1\tensor \D^*)\comp((D\tensor 1)\comp\f^*)\tensor 1\comp \a^*\\
&=& (1\tensor \D^*)\comp(M_{f\a(\f\times1)(1\times\D)}\comp\f^*\comp D)\tensor 1\comp \a^*\\
&=& M_{f\a(\f\times1)(1\times\D)}\comp(1\tensor \D^*)\comp((\f^*\comp D)\tensor 1)\comp \a^*.
\end{eqnarray*}
In the fourth equality we used the defining property of the family $\f$ of diffeomorphisms of $\ruu$. On the other hand,
\begin{eqnarray*}
RHS &=& (1\tensor \D^*)\comp(\f^*\tensor 1)\comp\a^*\comp fX \\
&=& M_{f\a(\f\times1)(1\times\D)}\comp ((1\tensor \D^*)\comp(\f^*\tensor 1)\comp\a^*\comp X )    \\ 
&=& M_{f\a(\f\times1)(1\times\D)}\comp ((1\tensor \D^*)\comp(\f^*\tensor 1)\comp(D\tensor 1)\comp\a^* ) \\
&=& M_{f\a(\f\times1)(1\times\D)}\comp((1\tensor \D^*)\comp((\f^*\comp D)\tensor 1)\comp \a^*),
\end{eqnarray*}
where in the third equality we used the fact that $\a$ is the flow of the vector field $X$. The two expressions coincide, and this verifies the lemma.

\end{proof}


\subsection{Even-odd rules} \label{eorules}
Consider the following families of even vector fields on $\Pi TM$
\[ \bar{e}= \{\ \sL_X\ | \ X \text{ vector field on } M\ \}, \]
respectively odd vector fields on $\Pi TM$
\[ \bar{o}= \{\ \iota_X\ | \ X \text{ vector field on } M\ \}. \]
The following lemma is easy to check.
\begin{lem}
\[ e\cdot \bar{o}\oplus o\cdot\bar{e} =\sX(\Pi TM)^\text{odd} \]
\[ e\cdot \bar{e}\oplus o\cdot\bar{o} =\sX(\Pi TM)^\text{ev}, \]
where $e$ and $o$ denote even, respectively odd functions on $\Pi TM$.
\end{lem}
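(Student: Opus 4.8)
The plan is to decompose an arbitrary homogeneous vector field on $\Pi TM$ with respect to the $\cinf(\Pi TM)=\O^*(M)$-module structure and the two distinguished families $\bar{e}$ and $\bar{o}$. I would first fix a local coordinate chart $(x^1,\dots,x^n)$ on $M$, which induces coordinates $(x^i,\psi^i)$ on $\Pi TM$, where the $\psi^i=dx^i$ are the odd fiber coordinates. In these coordinates the standard frame for $\sX(\Pi TM)$ consists of the even derivations $\partial/\partial x^i$ and the odd derivations $\partial/\partial \psi^i$, and every vector field is a unique $\cinf(\Pi TM)$-linear combination of these $2n$ generators. The whole computation is then a matter of expressing $\sL_X$ and $\iota_X$ in terms of this frame and checking that, module-theoretically, they generate everything.

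\medskip

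The key step is the explicit identification $\iota_{\partial/\partial x^i}=\partial/\partial \psi^i$ and $\sL_{\partial/\partial x^i}=\partial/\partial x^i$ (the latter by Cartan's formula $\sL_X=\iota_X d+d\iota_X$ applied to a coordinate vector field, which kills the $\psi$-dependence of the coefficients). Taking $X=\partial/\partial x^i$ shows that the coordinate generators $\partial/\partial \psi^i$ lie in $\bar{o}$ and $\partial/\partial x^i$ lie in $\bar{e}$. Now I would read off the parities: $\psi^i$ is odd, so $\partial/\partial\psi^i$ is odd and $\partial/\partial x^i$ is even. A general odd vector field is $\sum_i a_i \,\partial/\partial x^i + \sum_i b_i\,\partial/\partial\psi^i$ where the coefficients $a_i$ must be odd and $b_i$ even for the total parity to be odd; writing $a_i\,\partial/\partial x^i = a_i\,\sL_{\partial/\partial x^i}$ with $a_i$ odd exhibits it in $o\cdot\bar{e}$, and $b_i\,\partial/\partial\psi^i=b_i\,\iota_{\partial/\partial x^i}$ with $b_i$ even exhibits it in $e\cdot\bar{o}$. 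This gives the odd decomposition $\sX(\Pi TM)^{\text{odd}}=e\cdot\bar{o}\oplus o\cdot\bar{e}$. The even case is entirely symmetric: an even vector field has even $a_i$ and odd $b_i$, landing in $e\cdot\bar{e}\oplus o\cdot\bar{o}$.

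\medskip

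To promote these local statements to the global direct-sum decomposition in the lemma, I would argue that the two summands intersect trivially and span: the span is clear since the coordinate frame is exhausted as above, and the directness of the sum follows because, at each point, $\bar{e}$ and $\bar{o}$ project isomorphically onto the even (horizontal $\partial/\partial x^i$) and odd (vertical $\partial/\partial\psi^i$) parts of the tangent frame respectively, so a coefficient multiplication by functions of the opposite parity lands in complementary submodules with no overlap. One should also check the global well-definedness of the families $\bar{e},\bar{o}$—but this is automatic since $\sL_X$ and $\iota_X$ are defined coordinate-freely for any global vector field $X$, and partitions of unity let one patch local $X$'s.

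\medskip

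The main obstacle is purely bookkeeping rather than conceptual: one must be careful that the $\O^*(M)^{\text{ev}}$-coefficients genuinely realize \emph{every} coefficient function, i.e. that multiplying $\sL_X$ or $\iota_X$ by functions does not accidentally fix the internal parity of the coefficients in an unwanted way. The point to verify is that $\iota_{fX}=f\,\iota_X$ holds on the nose (contraction is $\cinf(M)$-linear in $X$) while $\sL_{fX}=f\,\sL_X+ df\wedge\iota_X$ has a correction term; since $df\wedge\iota_X$ again lies in $o\cdot\bar o$ (as $df$ is odd and $\iota_X$ is odd, their product being even times something—one checks the parities match the even summand), this correction stays inside the claimed decomposition and does not spoil it. Confirming this closure under $\cinf(\Pi TM)$-multiplication, together with the coordinate-frame count, completes the proof.
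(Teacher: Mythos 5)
Your coordinate computation is correct and gives the substantive half of the lemma: in local coordinates $(x^i,\psi^i)$ on $\Pi TM$ one indeed has $\iota_{\partial/\partial x^i}=\partial/\partial\psi^i$ and $\mathcal{L}_{\partial/\partial x^i}=\partial/\partial x^i$, and the parity count shows every homogeneous vector field is locally a sum of the required type; patching by a partition of unity works because $\iota_{\rho X}=\rho\,\iota_X$ and the correction term in $\mathcal{L}_{\rho X}=\rho\,\mathcal{L}_X+d\rho\wedge\iota_X$ stays inside the claimed spans. This spanning statement is what the paper actually uses (the paper itself prints no proof, calling the lemma "easy to check"), so on that score your argument is fine.

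The gap is in your directness argument, which rests on the false premise that $\bar{e}$ is "horizontal". For a general vector field $Y=\sum_j Y^j\partial/\partial x^j$ on $M$ one has
\[ \mathcal{L}_Y=\sum_j Y^j\frac{\partial}{\partial x^j}+\sum_{j,k}\frac{\partial Y^j}{\partial x^k}\,\psi^k\frac{\partial}{\partial\psi^j}, \]
which has vertical components unless the $Y^j$ are constant, so $o\cdot\bar{e}$ does not land in a submodule complementary to $e\cdot\bar{o}$. Worse, strict directness actually fails when $\dim M\ge 2$: your own formula $\mathcal{L}_{fX}=f\mathcal{L}_X+df\wedge\iota_X$, multiplied by an odd form $\eta$, gives
\[ \eta\,df\,\iota_X=\eta\,\mathcal{L}_{fX}-\eta f\,\mathcal{L}_X\ \in\ o\cdot\bar{e}, \]
while $\eta\,df$ is even, so the same element lies in $e\cdot\bar{o}$; concretely, on $\bR^2$,
\[ \psi^1\psi^2\,\frac{\partial}{\partial\psi^1}=dx^1\bigl(\mathcal{L}_{x^2\partial/\partial x^1}-x^2\,\mathcal{L}_{\partial/\partial x^1}\bigr) \]
is a nonzero element of $e\cdot\bar{o}\cap o\cdot\bar{e}$. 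So the "$\oplus$" in the lemma can only be read as asserting that the two pieces together span $\sX(\Pi TM)^{\text{odd}}$ (respectively $\sX(\Pi TM)^{\text{ev}}$), not that the sum of these $\O^*(M)$-spans is direct; this is also why the paper must perform the consistency checks immediately after the lemma when defining $\na$ — the decomposition of a vector field into these pieces is not unique. You should keep your spanning argument, delete the "complementary submodules" claim, and instead record that uniqueness fails (and is not needed for the construction of the connection).
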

To define a connection on $\pi^*E$ over $\Pi TM$ from an odd-trivial parallel transport, we first define
$\na_{\bar{e}}$ and $\na_{\bar{o}}$, using the flows of these vector fields and differentiating the parallel sections along these paths to obtain horizontal lifts, along which we differentiate arbitrary sections. 
It is not hard to check that 
\[ \na_{\bar{e}+\bar{e}}=\na_{\bar{e}}+\na_{\bar{e}}, \]
\[ \na_{\bar{o}+\bar{o}}=\na_{\bar{o}}+\na_{\bar{o}}. \]
This is true since in both cases we can express the flow of the sum of two vector fields in terms of the flows of each of the vector fields. Using the Lemmas \ref{ee} and \ref{eo}, we can check that 
\[ \na_{e\cdot\bar{e}}= e\cdot\na_{\bar{e}}, \]
\[ \na_{e\cdot\bar{o}}= e\cdot\na_{\bar{o}}. \]
We then define
\[ \na_{o\cdot\bar{o}}:= o\cdot\na_{\bar{o}}, \]
\[ \na_{o\cdot\bar{e}}:= o\cdot\na_{\bar{e}}. \]
If $\sE$ and $\sO$ denote the even, respectively odd vector fields on $\Pi TM$, we define
\[ \na_{\sE+\sE}:= \na_\sE+\na_\sE, \]
\[ \na_{\sO+\sO}:= \na_\sO+\na_\sO, \]
\[ \na_{\sE+\sO}:= \na_\sE+\na_\sO. \]
The first two relations require a consistency check. First, if $\sum \o_j\iota_{X_j}=0$, then 
\[ \na_{\sum \o_j\iota_{X_j}}=0, \]
since $\na_{\sum \o_j\iota_{X_j}}$ acts as the derivation $\sum \o_j\iota_{X_j}$ on $\G(\Pi TM; \pi^*E)=\O^*(M)\tensor \G(M; E)$. Second, if 
\[ \sum \o_j\sL_{X_j}=0, \]
then the  ${X_j}$'s are $\cinf(M)$-linearly dependent, and the two ways of defining for example
$\na_{\sL_{fX}}=\na_{f\sL_X}$, for $f\in\cinf(M)$, are consistent with each other.

We can summarize the above considerations in the following

\begin{lem} \label{eo}
Consider the map
\[ V\in\sX(\Pi TM) \longmapsto \na_V:\G(\Pi TM; \pi^*E)\ra \G(\Pi TM; \pi^*E), \]
so that $\na_{\bar{e}}$ and $\na_{\bar{o}}$ are $\bar{e}$- respectively $\bar{o}$-derivations. Moreover, we require
\[ \na_{e\cdot \bar{e}}= e\cdot\na_{\bar{e}}, \ \ \ \na_{e\cdot \bar{o}}= e\cdot\na_{\bar{o}}, \ \ \  \na_{\bar{e}+\bar{e}}=\na_{\bar{e}}+\na_{\bar{e}},\ \ \ \na_{\bar{o}+\bar{o}}=\na_{\bar{o}}+\na_{\bar{o}}. \]
Then $\na$ defines a connection on $\pi^*E$ over $\Pi TM$, extending by linearity the above relations.
\end{lem}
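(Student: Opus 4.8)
The plan is to verify that the assignment $V\mapsto\na_V$, prescribed on the generating vector fields $\bar{e}$ and $\bar{o}$ and extended through the four displayed relations together with the additivity conventions, descends to a well-defined $\cinf(\Pi TM)$-linear family satisfying the graded Leibniz rule in the section argument — this is precisely the data of a connection on $\pi^*E$.

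First I would invoke the preceding lemma identifying $\sX(\Pi TM)$ as $e\cdot\bar{e}\oplus o\cdot\bar{o}$ in even degree and $e\cdot\bar{o}\oplus o\cdot\bar{e}$ in odd degree. This shows that every $V\in\sX(\Pi TM)$ is an $\O^*(M)$-linear combination of the generators $\sL_X$ and $\iota_X$, so the four relations plus the additivity prescriptions determine $\na_V$ for every $V$. The first genuine task is therefore \emph{consistency}: the value of $\na_V$ must be independent of the chosen expression of $V$ in the generators, i.e. every $\O^*(M)$-linear relation among the $\sL_X$ and $\iota_X$ must be mapped to the corresponding relation among the operators $\na_{\sL_X}$, $\na_{\iota_X}$. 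These relations are generated by (i) the additivity and $\cinf(M)$-linearity of $X\mapsto\iota_X$, and (ii) the additivity of $X\mapsto\sL_X$ together with Cartan's identity $\sL_{fX}=f\sL_X+df\cdot\iota_X$.

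Relation (i) is matched directly: $\iota_{fX}=f\iota_X$ with $f$ even forces $\na_{\iota_{fX}}=f\na_{\iota_X}$, which is exactly $\na_{e\bar{o}}=e\na_{\bar{o}}$, and more generally any null combination $\sum\o_j\iota_{X_j}=0$ maps to $0$ because, as observed just before the statement, $\na_{\sum\o_j\iota_{X_j}}$ acts on $\G(\Pi TM;\pi^*E)=\O^*(M)\tensor\G(M;E)$ precisely as the derivation $\sum\o_j\iota_{X_j}$. Relation (ii) is the delicate one, and I expect it to be the main obstacle: Cartan's identity couples the $e\cdot\bar{e}$ and $o\cdot\bar{o}$ summands, so one must check that $\na_{\sL_{fX}}$, \emph{defined directly from the flow of $\sL_{fX}$}, agrees with $f\na_{\sL_X}+df\cdot\na_{\iota_X}$. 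This is where the flow computations of the previous subsection enter: Lemma \ref{ee} expresses the flow of $f\sL_X$ as a reparametrization of the flow of $\sL_X$, which is what yields $\na_{e\bar{e}}=e\na_{\bar{e}}$, while the analysis of the odd flows isolates the remaining $df\cdot\iota_X$ contribution. I would carry this out by comparing the two horizontal lifts along the respective flows, recovering the consistency already flagged in the paragraph preceding the lemma.

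With consistency in hand, tensoriality $\na_{gV}=g\na_V$ (for even $g$ by hypothesis, for odd $g$ by the defining relations $\na_{o\bar{o}}:=o\na_{\bar{o}}$, $\na_{o\bar{e}}:=o\na_{\bar{e}}$) and additivity in $V$ hold by construction. It then remains to promote the Leibniz rule from the generators to an arbitrary $V$. For a generator $W\in\{\sL_X,\iota_X\}$ the derivation hypothesis gives $\na_W(fs)=W(f)s+(-1)^{|W||f|}f\na_W s$. Writing $V=\sum_a g_a W_a$ and using $\na_{g_aW_a}=g_a\na_{W_a}$, a short graded-sign manipulation closes the argument: moving $f$ past $g_a$ contributes exactly the factor $(-1)^{|g_a||f|}$ that converts $(-1)^{|W_a||f|}$ into $(-1)^{|V||f|}$, so that $\na_V(fs)=V(f)s+(-1)^{|V||f|}f\na_V s$. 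Thus $\{\na_V\}$ is $\cinf(\Pi TM)$-linear and additive in $V$ and satisfies the graded Leibniz rule in $s$, hence $\na$ is a connection on $\pi^*E$ over $\Pi TM$, as claimed.
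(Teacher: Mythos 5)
Your proposal is correct and takes essentially the same approach as the paper: the same decomposition of $\sX(\Pi TM)$ into $\O^*(M)$-combinations of the generators $\sL_X$ and $\iota_X$, the same two well-definedness checks (the odd one via the observation that $\na_{\sum\o_j\iota_{X_j}}$ acts as the derivation $\sum\o_j\iota_{X_j}$ on $\O^*(M)\tensor\G(M;E)$, the even one via the flow lemmas of Subsection 4.1), plus the graded-Leibniz bookkeeping that the paper leaves implicit. Your formulation of the even consistency as $\na_{\sL_{fX}}=f\na_{\sL_X}+df\cdot\na_{\iota_X}$ (Cartan's identity) is in fact sharper than the paper's terse remark that ``the two ways of defining $\na_{\sL_{fX}}=\na_{f\sL_X}$ are consistent,'' although, like the paper, you leave the flow-level verification of that identity sketched rather than carried out.
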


\subsection{Conclusion of the proof of Proposition \ref{3}}
Now we can finally describe the arrow ``$\longrightarrow$" of Proposition \ref{3} since an odd-trivial parallel transport defines a map $\na$ satisfying the conditions in the Lemma \ref{eo}, so $\na$ defines a connection on $\pi^*E$ over $\Pi TM$. This connection is clearly odd-trivial. Let us remark that the Lie bracket of odd vector fields lifts in a compatible way which is consistent with the fact that an odd-trivial connection is flat in the odd directions. 

The only thing left to check is that the two arrows are inverse of each other. A standard argument (see \cite{D1} Subsection 3.3) shows that the parallel transport of a connection recovers the connection; this means for us that the map ``$\longleftarrow$" of Proposition \ref{3} is injective. To finish the proof, it is enough to verify that
\[ \longleftarrow\comp\longrightarrow \ = id. \]

That is, start with a $1|1$ parallel transport on $E$ over $M$ and consider the associated connection $\na$ defined  by the even rules above. We have to verify that the $1|1$ parallel transport determined by the connection coincides with the $1|1$-transport we started off with. 


Recall that the connection $\na$ is defined by looking at parallel sections along families of (super)paths
\[ \ruu\times \Pi TM\ra \Pi TM, \ \ \ \  \bR\times \Pi TM\ra \Pi TM, \]
which come from flows of odd respectively even vector fields on $\Pi TM$.

First, using the definition of the connection and the properties of flows in Section 4.1, we infer that the two parallel transport functors coincide for flows of vector fields of the form
\[ \bar{e}\ \ \ \   \bar{o}\ \ \ \ \bar{e}+\bar{e}\ \ \ \  \bar{o}+\bar{o}\ \ \ \ e\cdot \bar{e}\ \ \ \ e\cdot \bar{o}, \]
as by $\bar{o}$ we denoted odd vector fields of the type $\iota_{X}$ on $\Pi TM$ (for $X$ a vector field on $M$) and these square to zero, and sums of these also square to zero. 

Next we verify the identity of the parallel transport functors on vector fields of the type
\[  o\cdot\bar{o}. \]
Let $f$ be an odd function and $X$ an odd vector field on $\Pi TM$ so that $X^{2}=0$ and $X(f)=0$ or $X(f)=1$ (This is no restriction, as the more general case $o\cdot\bar{o}$ is obtained by multiplication by an even function for which we apply Lemma  \ref{ee}). When $X(f)=0$, the flow of $fX$ is given by 
\[ \a:\bR\times\Pi TM \ra \Pi TM,\ \ \ \a^{*}\o=\o+t fX(\o). \]
Therefore $\a$ can be written as the composition
\[ \xymatrix@C=4pc{\bR\times \Pi TM \ar@{-->}[r]^{\a} \ar[d] & \Pi TM \\
\rou\times\bR\times \Pi TM  \ar[r]   &   \bR\times \Pi TM. \ar[u] } \]
where the first map is induced by the odd function $f$ on $\Pi TM$, the second map expresses the flow of the odd vector field $tX$ on $\bR\times M$ and the last map is the projection map. 

When $X(f)=1$, the flow of $fX$ is given by 
\[ \a:\bR\times\Pi TM \ra \Pi TM,\ \ \ \a^{*}\o=\o+(e^{t}-1) fX(\o). \]
Therefore $\a$ can be written as the composition
\[ \xymatrix@C=4pc{\bR\times \Pi TM \ar@{-->}[r]^{\a} \ar[d] & \Pi TM \\
\rou\times\bR\times \Pi TM  \ar[r]   &   \bR\times \Pi TM. \ar[u] } \]
where the first map is induced by the odd function $f$ on $\Pi TM$, the second map expresses the flow of the odd vector field $(e^{t}-1)X$ on $\bR\times M$ and the last map is the projection map.

In both situations we can express the parallel sections along $fX$ with respect to the parallel sections along $X$. This proves the compatibility of parallel transports along flows of the type $o\cdot\bar{o}$. The case $o\cdot\bar{o}+o\cdot\bar{o}$ as well as the cases $e\cdot\bar{e}+e\cdot\bar{e}$ and  $e\cdot\bar{e}+o\cdot\bar{o}$ are covered by Remark \ref{ree}. Therefore, the two parallel transport functors coincide alongside flows of arbitrary \emph{even} vector fields on $\Pi TM$.

Consider now  vector fields of the type  
\[o\cdot \bar{e}.\]
In this case we pass to the \emph{intermediate} space $\rou\times \Pi TM$ via the projection map 
\[ \rou\times \Pi TM \ra \Pi TM, \]
and use the functoriality of parallel transport under pullbacks. Let therefore $f$ be an odd function on $\Pi TM$ and $X$ an even vector field on $\Pi TM$ so that $X(f)=0$. This means in particular that $(fX)^{2}=0$. (This is no restriction on the type $o\cdot \bar{e}$, if we combine our choice with the fact shown below that the parallel transports coincide for flows of vector fields of the type $o\cdot \bar{e}+o\cdot \bar{e}$, so that the coefficient function is annihilated by the vector field).
The flow of $fX$ is then given by 
\[ \a:\rou\times\Pi TM \ra \Pi TM,\ \ \ \a^{*}\o=\o+\theta fX(\o). \]
This precise formula allows us to write the flow $\a$ as the composition

\[ \xymatrix@C=4pc{\rou\times \Pi TM \ar@{-->}[r]^{\a} \ar[d] & \Pi TM \\
\bR\times\rou\times \Pi TM  \ar[r]   &   \rou\times \Pi TM. \ar[u] } \]
where the first map is the inclusion $t=1$, the second map gives the flow of the {\it even} vector field $(\theta f)X$ on $\rou\times\Pi TM$ and the last map is the projection map. 
Because parallel transport is functorial and is the same along the $\bR$-action map in the diagram above, which expresses the flow of an \emph{even} function multiplying an even vector field, the two transport functors give rise to the same $\rou$-action, i.e. they coincide along the flow of $fX$.

A similar trick applies for vector fields of the type
\[  e\cdot\bar{o}+e\cdot\bar{o}\hspace{.5in} o\cdot\bar{e}+o\cdot\bar{e} \hspace{.5in}  e\cdot\bar{o}+o\cdot\bar{e} \]
by passing again to the intermediate space $\rou\times \Pi TM$ via the projection map 
\[ \rou\times \Pi TM \ra \Pi TM. \]
Indeed, let $X$ be a vector field of one of the three types above. The flow of $X$ is given by 
\[ \a:\ruu\times\Pi TM \ra \Pi TM,\ \ \ \ \ \ \ \a^{*}\o=e^{-tX^2+\th X}\o=e^{-tX^2}(1+\th X)\o. \]
The flow $\a$ can be written as the composition
\[ \xymatrix@C=4pc{\bR\times\rou\times \Pi TM \ar@{-->}[rr]^{\a} \ar[d] & & \Pi TM \\
\rou\times \Pi TM  \ar[r]   &   \bR\times\rou\times \Pi TM\ar[r] & \rou\times\Pi TM. \ar[u] } \]
The first map expresses the flow of the even vector field $-X^{2}$ on $\rou\times\Pi TM$, the second map is the inclusion $t=1$, the third map is the action map of the even vector field $\th X$ on $\rou\times\Pi TM$ and the last map is the projection map. Functoriality of parallel transport combined with the compatibility of the parallel transport functors for \emph{even} vector fields already proven, gives the required compatibility along the flow $\a$.

These cases cover all types of vector fields we have on $\Pi TM$. Therefore, the above composition is the identity for parallel transport along families determined by flows of \emph{arbitrary} vector fields on $\Pi TM$. 

Finally, we are left to consider an \emph{arbitrary family} of (super)paths and show that the given $1|1$ parallel transport coincides with the one emerging from the connection. This general situation reduces as follows to the case of families of super(paths) coming from flows of vector fields discussed so far.

By smoothness of parallel transport, the two transport functors coincide for {\it families of flows} of vector fields on $\Pi TM$, i.e. maps of the form 
\[ \ruu\times \Pi TM\times S \ra \Pi TM, \ \ \ \  \bR\times \Pi TM\times S\ra \Pi TM, \]
where $S$ is an arbitrary supermanifold. Consider now an arbitrary family of superpaths
\[ c:\ruu\times S \ra \Pi TM, \]
parametrized by a supermanifold $S$. Then $c$ factors as below
\[ \xymatrix@C=4pc{\ruu\times S \ar[rr]^c \ar[dr]_{1\times c_0\times 1} & & \Pi TM, \\
& \ruu\times \Pi TM\times S \ar[ur]^\a & } \]
where $c_0$ denotes the restriction of $c$ to ${(0,0)}\times S\hookrightarrow \ruu\times S$, and the upper right arrow $\a$ is a map of families of flows of vector fields on $\Pi TM$ parametrized by $S$ (such a map $\a$ exists since any superpath in a supermanifold is an integral curve of a vector field on the supermanifold, at least locally). Now, the parallel transport of the connection coincides with the original parallel transport along the superpath $\a$, and, by naturality of parallel transport, along $c$ as well. A similar argument applies for (families of) paths in $\Pi TM$. This then verifies that the above composition of arrows is the identity and concludes the proof of Proposition \ref{3}. 

This also finishes the proof of Theorem \ref{0}, by putting together Propositions \ref{1}, \ref{2} and \ref{3}. \\

One should add a word about the functoriality of the parallel transport via the map
\[ \rou\times \Pi TM \ra \Pi TM, \]
used in the proof above to treat the case of flows of arbitrary odd vector fields on the supermanifold $\Pi TM$.
As we noted, the two parallel transports coincide by definition for $\rou$-actions and specific $\bR$-actions, which allows us to show they coincide for \emph{all} $\bR$-actions, i.e. for the flows of all \emph{even} vector fields. This in particular implies that the two parallel transports coincide for arbitrary families of \emph{paths} ($\bR$-families), by the same argument that passes from flows to arbitrary families. If we take the parametrizing space for the family to be $\rou\times \Pi TM$, we obtain that the pullbacks of the transport functors under the map above coincide for $\bR$-actions on the auxiliary space $\rou\times \Pi TM$. Similar reasoning applies to functoriality of parallel transport  via the map 
\[ \bR\times \Pi TM\ra \Pi TM \]
relative to $\rou$-actions, used in identifying the parallel transport functors for flows of vector fields  of the type $o\cdot\bar{o}$. \\

\noindent\textbf{Concluding remarks.} 
\begin{enumerate}
\item A consequence of our proof above is that the flow of a sum of two odd vector fields on a supermanifold can be expressed as the composition of the flows of each of the vector fields, although we do not know a closed formula. From a field theoretic perspective this carries no weight, as the algebraic operation of summation is described via composition. The philosophical meaning is that whatever construction a field theory does over a space, it is carried on also to the algebraic operation of summation over the space. Similarly about function multiplication.\\

\item The main difficulty in proving this result was on circumventing the fact that we don't have a closed formula for the flow of sum of \emph{odd} vector fields (as we do for even vector fields), or for the flow of an \emph{odd} function multiplying a vector field. The formulas in Section 4.1 are sufficient though to allow for this ``even passage'' in the superworld. As a result we conclude that the information of a $1|1$ parallel transport over a (super)manifold $M$ is encoded in parallel transport along flows of even vector fields, and odd vector fields that square to zero. Thus we do not need to look at arbitrary families of (super)paths in $M$ to single out the transport functor.\\

\item As a final comment, let us remark that the map ``$\longleftarrow$" in Theorem \ref{0} coincides with our original construction of Section 3 in \cite{D1}. Indeed, if $c:\ruu\times S\ra M$ denotes a superpath in $M$, a parallel section along $c$ is given by a parallel section $s$ along $ic$, where $i:M\ra \Pi TM$ is the standard inclusion map, according to Proposition \ref{2}. By Proposition \ref{3}, $s$ must satisfy the differential equation
\[ (ic)^*(\pi^*\na)_D s=0. \]
As $\pi i=id$, we have that $i^*\pi^*\na=\na$, and therefore the above equation is equivalent to 
\[ (c^*\na)_D s=0, \]
the equation that defines the parallel transport along $c$ in \cite{D1}.
\end{enumerate}

\vspace{.4cm}

\noindent\emph{Acknowledgements.} The author would like to especially thank the referee for signaling to us a few important omissions in the first version of the paper and for suggestions that improved the presentation of the paper. This article was written while the author was visiting Max Planck Institute for Mathematics in Bonn and the Institute of Mathematics ``Simion Stoilow" in Bucharest, and completed while at University of Hamburg. We gratefully acknowledge these Institutions for their hospitality and support.

\bibliographystyle{plain}
\bibliography{bibliografie}

\bigskip
\raggedright Universit\"at Hamburg\\  Bundesstra\ss{}e 55 \\
D-20146 Hamburg\\ Email: {\tt florinndo@gmail.com}

\end{document}